\let\OLDthebibliography\thebibliography
\renewcommand\thebibliography[1]{
  \OLDthebibliography{#1}
  \setlength{\parskip}{0pt}
  \setlength{\itemsep}{0pt plus 0.3ex}
}
\newtheorem{thm}{Theorem}[section]
\newtheorem{lemma}[thm]{Lemma}
\newtheorem{cor}[thm]{Corollary}
\theoremstyle{definition}
\theoremstyle{remark}
\numberwithin{equation}{section}
\newcommand{\mmod}[1]{\,\,\rm{mod}\,\,#1}
\newcommand*\wrapletters[1]{\wr@pletters#1\@nil}
\def\wr@pletters#1#2\@nil{#1\allowbreak\if&#2&\else\wr@pletters#2\@nil\fi}
\def\alp{{\alpha}} 
\def\bet{{\beta}}  
\def\gam{{\gamma}} 
\def\del{{\delta}}
\def \blam {{\boldsymbol \lam}}
\def\tet{{\theta}}  
\def\lam{{\lambda}} \def\Lam{{\Lambda}}
\def\sig{{\sigma}}
\def\eps{\varepsilon}
\def\le{\leqslant} \def\ge{\geqslant}
\def\d{{\,{\rm d}}}
\def \sig{{\sigma}}
\def \bA {\mathbf A}
\def \bK {\mathbb K}
\def \bN {\mathbb N}
\def \bQ {\mathbb Q}
\def \bR {\mathbb R}
\def \bZ {\mathbb Z}
\def \bL {\mathbf L}
\def \ba {\mathbf a}
\def \bc {\mathbf c}
\def \be {\mathbf e}
\def \bf {\mathbf f}	
\def \bh {\mathbf h}
\def \bk {\mathbf k}
\def \bq {\mathbf q}
\def \bx {\mathbf x}
\def \by {\mathbf y}
\def \bz {\mathbf z}
\def \bzero {\mathbf 0}
\def \btau {\boldsymbol{\tau}}
\def \balp {\boldsymbol{\alp}}
\def \bbet {\boldsymbol{\beta}}
\def \bgam {\boldsymbol{\gam}}
\def \bxi {{\boldsymbol{\xi}}}
\def \fm {\mathfrak m}
\def \fp {\mathfrak p}
\def \ft {\mathfrak t}
\def \fM {\mathfrak M}
\def \fN {\mathfrak N}
\def \fQ {\mathfrak Q}
\def \fR {\mathfrak R}
\def \fS {\mathfrak S}
\def \fU {\mathfrak U}
\def \cF {\mathcal F}
\def \cM {\mathcal M}
\def \cS {\mathcal S}
\def \cU {\mathcal U}
\def \rank {\mathrm{rank}}
\def \dim {\mathrm{dim}}
\def \sinc {\mathrm{sinc}}
\def \meas {\mathrm{meas}}
\def \dag {\dagger}
\begin{document}
\title[Equidistribution on a cubic hypersurface]{Equidistribution of values of linear forms on a cubic hypersurface}
\author[Sam Chow]{Sam Chow}
\address{School of Mathematics, University of Bristol, University Walk, Clifton, Bristol BS8 1TW, United Kingdom}
\email{Sam.Chow@bristol.ac.uk}
\subjclass[2010]{11D25, 11D75, 11J13, 11J71, 11P55}
\keywords{Diophantine equations, diophantine inequalities, diophantine approximation}
\thanks{}
\date{}
\begin{abstract} Let $C$ be a cubic form with rational coefficients in $n$ variables, and let $h$ be the $h$-invariant of $C$. Let $L_1, \ldots, L_r$ be linear forms with real coefficients such that if $\balp \in \mathbb{R}^r \setminus \{ \boldsymbol{0} \}$ then $\boldsymbol{\alp} \cdot \mathbf{L}$ is not a rational form. Assume that $h > 16 + 8 r$. Let $\boldsymbol{\tau} \in \mathbb{R}^r$, and let $\eta$ be a positive real number. We prove an asymptotic formula for the weighted number of integer solutions \mbox{$\mathbf{x} \in [-P,P]^n$} to the system $C(\mathbf{x}) = 0, \: |\mathbf{L}(\mathbf{x}) - \boldsymbol{\tau}| < \eta$. If the coefficients of the linear forms are algebraically independent over the rationals, then we may replace the $h$-invariant condition with the hypothesis $n > 16 +  9 r$, and show that the system has an integer solution. Finally, we show that the values of $\mathbf{L}$ at integer zeros of $C$ are equidistributed modulo one in $\mathbb{R}^r$, requiring only that $h > 16$. 
\end{abstract}
\maketitle

\section{Introduction}
\label{intro}

Recently Sargent \cite{Sar2014} used ergodic methods to establish the equidistribution of values of real linear forms on a rational quadric, subject to modest conditions. His ideas stemmed from quantitative refinements \cite{DM1993, EMM1998} of Margulis' proof \cite{Mar1989} of the Oppenheim conjecture. Such techniques do not readily apply to higher degree hypersurfaces. Our purpose here is to use analytic methods to obtain similar results on a cubic hypersurface. 

Our first theorem is stated in terms of the \emph{h-invariant} of a nontrivial rational cubic form $C$ in $n$ variables, which is defined to be the least positive integer $h$ such that
\begin{equation} \label{hdef}
C(\bx) = A_1(\bx)B_1(\bx) + \ldots + A_h(\bx)B_h(\bx)
\end{equation}
identically, for some rational linear forms $A_1, \ldots, A_h$ and some rational quadratic forms $B_1, \ldots, B_h$. The $h$-invariant describes the geometry of the hypersurface $\{ C=0 \}$, and in fact $n-h$ is the greatest affine dimension of any rational linear space contained in this hypersurface (therefore $1 \le h \le n$).

\begin{thm} \label{thm1} Let $C$ be a cubic form with rational coefficients in $n$ variables, and let $h = h(C)$ be the $h$-invariant of $C$. Let $L_1, \ldots, L_r$ be linear forms with real coefficients in $n$ variables such that if $\balp \in \bR^r \setminus \{ \bzero \}$ then $\balp \cdot \bL$ is not a rational form. Assume that 
\begin{equation} \label{numvars}
h > 16 + 8 r.
\end{equation}
Let $\btau \in \bR^r$ and $\eta > 0$. Let
\begin{equation} \label{wdef}
w(\bx) = \begin{cases}
\exp \Biggl(- \displaystyle \sum_{j \le n} \frac1 {1- x_j^2} \Biggr), & |\bx| < 1 \\
0, & |\bx| \ge 1,
\end{cases}
\end{equation}
and define the weighted counting function
\[
N_w(P) = \sum_{\substack{\bx \in \bZ^n: \\ C(\bx) = 0, \: |\bL(\bx) - \btau| < \eta}} w(\bx / P).
\]
Then
\begin{equation} \label{asymp}
N_w(P) = (2 \eta)^r \fS \chi_w P^{n-r-3} + o(P^{n-r-3})
\end{equation}
as $P \to \infty$, where
\begin{equation} \label{SS}
\fS = \sum_{q \in \bN} q^{-n} \sum_{\substack{a \mmod q \\ (a,q) = 1}} \:
\sum_{\bx \mmod q} e_q(aC(\bx))
\end{equation}
and
\begin{equation} \label{chidef}
\chi_w = \int_{\bR^{r+1}} \int_{\bR^n} w(\bx)  e(\bet_0 C(\bx) + \balp \cdot \bL(\bx)) \d \bx \d \bet_0 \d \balp.
\end{equation}
Further, we have $\fS \chi_w > 0$.
\end{thm}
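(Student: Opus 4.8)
The plan is to run the circle method to detect the equation $C(\bx)=0$ alongside the Davenport--Heilbronn variant to detect the inequalities $\abs{\bL(\bx)-\btau}<\eta$. After clearing denominators we may assume $C$ has integer coefficients, so that $\int_0^1 e(\bet_0 C(\bx))\,\d\bet_0=\mathbf{1}_{\{C(\bx)=0\}}$ for $\bx\in\bZ^n$. For $\bet_0\in[0,1)$ and $\balp\in\bR^r$ write
\[
S(\bet_0,\balp)=\sum_{\bx\in\bZ^n}w(\bx/P)\,e\bigl(\bet_0 C(\bx)+\balp\cdot\bL(\bx)\bigr),
\]
and, for a parameter $\del>0$ to be sent to $0$ at the very end, fix Fourier kernels $K_{\pm}$ on $\bR^r$ with $K_{-}(\bt)\le\mathbf{1}_{\{\abs{\bt}<\eta\}}(\bt)\le K_{+}(\bt)$, $\widehat{K_{\pm}}\in L^1(\bR^r)$, $\lVert\widehat{K_{\pm}}\rVert_1\ll_\del 1$, $\widehat{K_{\pm}}(\bzero)=(2\eta)^r+O(\del)$ and $\widehat{K_{+}}(\bzero)-\widehat{K_{-}}(\bzero)=O(\del)$; these are obtained by smoothing the indicator of the box $\{\abs{\bt}<\eta\}$ on scale $\del$. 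Then $N_w(P)$ lies between
\[
N_w^{\pm}(P)=\int_0^1\!\!\int_{\bR^r}\widehat{K_{\pm}}(\balp)\,e(-\balp\cdot\btau)\,S(\bet_0,\balp)\,\d\balp\,\d\bet_0,
\]
and I would evaluate each of these by dissecting $[0,1)$ into major arcs $\fM$ --- those $\bet_0$ with $\abs{\bet_0-a/q}\le P^{\eps-3}$ for some reduced $a/q$, $q\le P^{\eps}$ --- and the complementary minor arcs $\fm$, treating $\balp$ globally. Showing in addition that $N_w^{+}(P)-N_w^{-}(P)=O(\del P^{n-r-3})+o_\del(P^{n-r-3})$ will reduce \eqref{asymp} to the evaluation of either $N_w^{\pm}$.

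On the minor arcs the key input is Davenport's estimates for cubic exponential sums, phrased through the $h$-invariant. Weyl differencing $S(\bet_0,\balp)$, as one would for a pure cubic, makes the linear form drop out --- after the first differencing $\balp\cdot\bL(\bx)$ becomes an $\bx$-independent unimodular factor --- so Davenport's argument yields a bound $\abs{S(\bet_0,\balp)}\ll P^{n-\del_0+\eps}$ that is \emph{uniform in $\balp$} for $\bet_0\in\fm$, where $\del_0=\del_0(h)$ grows linearly in $h$ and exceeds $r+3$ precisely when $h>16+8r$. Multiplying by $\lVert\widehat{K_{\pm}}\rVert_1\ll_\del 1$ and integrating over $\fm$ and over $\balp\in\bR^r$ bounds the minor-arc contribution by $\ll_\del P^{n-\del_0+\eps}=o_\del(P^{n-r-3})$. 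This is where hypothesis \eqref{numvars} is spent: the main term is supported on the variety $\{C=0\}\cap\{\bL=\btau\}$ of codimension $r+1$ and so has size $P^{n-r-3}$, while in Davenport's inequality one must raise $h$ by $8$ for each further unit of saving $\del_0$.

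On a major arc, writing $\bet_0=a/q+z$ with $\abs z\le P^{\eps-3}$, the smoothness of $w$ makes the passage to an integral lossless: by Poisson summation
\[
S(\bet_0,\balp)=q^{-n}S(q,a)\,v(z,\balp)+O_N(P^{-N})\qquad(N\ge1),
\]
where $S(q,a)=\sum_{\bx\mmod q}e_q(aC(\bx))$ and $v(z,\balp)=\int_{\bR^n}w(\by/P)\,e\bigl(zC(\by)+\balp\cdot\bL(\by)\bigr)\,\d\by$; the rapid decay of $\widehat{w}$ further confines the effective range of $\balp$ here to $\abs{\balp}\ll P^{\eps-1}$. Summing over the major arcs factors the contribution into a truncated singular series times a truncated singular integral, and I would complete each: by $h>16$ the series $\fS$ of \eqref{SS} converges absolutely, with a power-saving tail; and after the substitutions $z\mapsto zP^{-3}$, $\balp\mapsto\balp P^{-1}$ the integral converges --- again using that $h$ is large for the decay in $z$ and that $\rank\bL=r$ for the decay in $\balp$ --- to the quantity $\chi_w$ of \eqref{chidef}, while $\widehat{K_{\pm}}(\balp P^{-1})\to\widehat{K_{\pm}}(\bzero)$ and $e(-\balp\cdot\btau P^{-1})\to1$ by dominated convergence. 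This yields the main term $(2\eta)^r\fS\chi_w P^{n-r-3}$ up to an admissible $O(\del P^{n-r-3})$. The hypothesis on $\bL$ is decisive here: since no nontrivial $\balp\cdot\bL$ is a rational form, for $\balp$ in any compact set away from $\bzero$ the sum $\sum_{\bx}w(\bx/P)e(\balp\cdot\bL(\bx))$ exhibits no spurious major-arc behaviour, so the $\balp$-integral peaks only at the origin and contributes no secondary main term. Running the same dissection with $K_{+}-K_{-}$ in place of $K_{\pm}$, and using $\widehat{K_{+}}(\bzero)-\widehat{K_{-}}(\bzero)=O(\del)$, gives the asserted bound on $N_w^{+}-N_w^{-}$; letting first $P\to\infty$ and then $\del\to0$ establishes \eqref{asymp}.

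It remains to see $\fS\chi_w>0$. Since $h>16$, the form $C$ has nonsingular $p$-adic zeros for every prime $p$, so $\fS=\prod_p\sigma_p$ converges with each $\sigma_p>0$; hence $\fS>0$. For $\chi_w$, interchanging the order of integration identifies it, by Fourier inversion, with the integral of $w$ against the Leray (Gelfand--Leray) measure on the real variety $W=\{C=0\}\cap\{\bL=\bzero\}$. The largeness of $h$ guarantees that the restriction of $C$ to the subspace $\{\bL=\bzero\}$ is neither identically zero nor a cube of a linear form: otherwise $C$ would be a cube plus a sum of $r$ products of a linear form and a real quadratic form, whence (by the field-comparability of the $h$-invariant of a cubic form) $h(C)=O(r)$, contradicting \eqref{numvars}. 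Consequently this restriction is a cubic form whose zero locus is a reduced real hypersurface with nonsingular points arbitrarily close to the origin; at such a point $\bx_0$, with $\abs{\bx_0}<1$, the component of $\nabla C(\bx_0)$ tangent to $\{\bL=\bzero\}$ is nonzero, so $\nabla C(\bx_0),\nabla L_1,\ldots,\nabla L_r$ are linearly independent and $W$ carries a positive Leray measure near $\bx_0$, where $w>0$. Therefore $\chi_w>0$ and the proof is complete. I expect the main obstacle to be the minor-arc estimate and its bookkeeping: wringing from Davenport's cubic Weyl inequality a saving $\del_0(h)$ large enough to absorb the extra $r$-fold integration over $\balp$ and still beat the reduced main term $P^{n-r-3}$ --- which is precisely what fixes the shape $h>16+8r$ of \eqref{numvars} --- together with the care needed in constructing the Davenport--Heilbronn kernels so that the boundary discrepancy stays below the main term.
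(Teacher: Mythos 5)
Your overall architecture (orthogonality in $\bet_0$ plus a Davenport--Heilbronn kernel in $\balp$, Weyl/Davenport bounds via the $h$-invariant, Poisson summation on the major arcs, positivity at the end) points in the right direction, but the two places where the theorem is actually hard are not supplied, and one of your quantitative claims is false. First, the minor-arc accounting does not work as stated. Your minor arcs are the complement of arcs with $q \le P^{\eps}$, and on that complement the Davenport--Lewis/Weyl mechanism (if $|S(\alp_0,\balp)| \ge P^{n - h\tet/4 + \eps}$ then $\alp_0$ is approximable with $q \le P^{2\tet}$, $|q\alp_0 - a| < P^{2\tet-3}$) only yields a saving of order $h\eps$, nowhere near $r+3$; and even if you widened the major arcs to the Dirichlet limit, a pointwise bound uniform in $\balp$, integrated against $\|\widehat{K_\pm}\|_1 = O_\del(1)$, would require a saving exceeding $r+3$, i.e.\ roughly $h > 24 + 8r$, not $h > 16 + 8r$. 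So the claim that $\del_0(h) > r+3$ ``precisely when $h > 16+8r$'' is reverse-engineered rather than proved. The paper obtains $h>16+8r$ only by a staged pruning argument that balances the \emph{measure} of the arcs $\fN(\tet)$ (which is $\ll P^{4\tet-3}$ per unit cube in $\balp$) against the Weyl saving $h\tet/4$, and then must still do substantial work on the surviving wide arcs: Poisson summation with quantified error (needing the complete-sum bound $S_{q,a,\ba} \ll q^{n-h/8+\eps}$ and Heath-Brown's first-derivative bound), a second rational-approximation step for $\Lam\balp$, and a mean value estimate of the exact order $P^{n-r-3}$. None of this can be bypassed by the dichotomy ``narrow major arcs easy, minor arcs killed by a sup bound.''

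Second, the irrationality hypothesis is invoked only rhetorically, whereas it is the engine of the proof. Even on your narrow arcs in $\bet_0$, the Poisson expansion contains terms $I\bigl(P^3 z, P(\Lam\balp - \bc/q)\bigr)$ with $\bc \ne \bzero$, which are \emph{not} negligible when $q\Lam\balp$ is close to a nonzero integer vector; thus the assertion that rapid decay confines the $\balp$-integral to $|\balp| \ll P^{\eps-1}$, and that ``the $\balp$-integral peaks only at the origin,'' is exactly the point at issue. Because the statement ``$\balp\cdot\bL$ is never rational'' is not quantitative, one cannot get a fixed power saving on these resonant ranges; the paper's remedy is the Bentkus--G\"otze--Freeman compactness argument (Lemma \ref{BGF1}, Corollary \ref{BGF2}), producing a function $T(P)\to\infty$ with $\sup_{P^{\tet-1}\le|\Lam\balp|\le T(P)}\cF(\balp;P) \le T(P)^{-1}$, a three-way dissection of the $\balp$-space into $\fM$, $\fm$, $\ft$, the mean value estimate to convert the $o(1)$ saving into $o(P^{n-r-3})$, and kernel decay on $\ft$; your sketch contains no substitute for this, and without it no asymptotic (as opposed to a lower bound) can be extracted for fixed $\del$. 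Finally, your positivity argument for $\chi_w$ leans on an unproved ``field-comparability of the $h$-invariant'' (a real decomposition $C=\ell^3+\sum_{i\le r}L_iQ_i$ does not obviously bound the rational $h$-invariant) and on the existence of nonsingular real points of $C$ restricted to $\{\bL=\bzero\}$; the paper avoids both issues by proving the equivalence of \eqref{chidef} with Schmidt's limit definition and then using Schmidt's dimension bound $\dim\{C=L_1=\cdots=L_r=0\}\ge n-r-1$ (odd-degree real solubility on every $(r+2)$-dimensional subspace), which requires no nonsingularity. You should rework the proof along these lines, or else justify each of the three steps above independently.
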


We interpret $N_w(P)$ as a weighted count for the number of integer solutions $\bx \in (-P,P)^n$ to the system
\begin{equation} \label{system}
C(\bx) = 0, \quad |\bL(\bx) - \btau| < \eta.
\end{equation}
The smooth weight function $w(\bx)$ defined in \eqref{wdef} is taken from \cite{HB1996}. Importantly, it has bounded support and bounded partial derivatives of all orders. One advantage of the weighted approach is that it enables the use of Poisson summation.

The \emph{singular series} $\fS$ may be interpreted as a product of $p$-adic densities of points on the hypersurface $\{ C=0 \}$; see \cite[\S 7]{Bir1962}. Note that this captures the arithmetic of $C$, but that no such arithmetic is present for the linear forms $L_1, \ldots, L_r$, since they are `irrational' in a precise sense. 

The weighted \emph{singular integral} $\chi_w$ arises naturally in our proof as the right hand side of \eqref{chidef}. Using Schmidt's work \cite{Sch1982b, Sch1985}, we can interpret $\chi_w$ as the weighted real density of points on the variety $\{ C = L_1 = \ldots = L_r = 0 \}$. For $L > 0$ and $\xi \in \bR$, let
\[
\psi_L(\xi) = L \cdot \max(0,1 - L|\xi|).
\]
For $\bxi \in \bR^{r+1}$, put
\[
\Psi_L(\bxi) = \prod_{v \le r+1} \psi_L(\xi_v).
\]
With $\bf = (C,\bL)$, set
\[
I_L(\bf) = \int_{\bR^n} w(\bx) \Psi_L(\bf(\bx)) \d \bx,
\]
and define
\begin{equation} \label{cSchmidt}
\chi_w = \lim_{L \to \infty} I_L(\bf).
\end{equation}
We shall see that the limit \eqref{cSchmidt} exists, and that this definition is equivalent to the analytic definition \eqref{chidef}.

We may replace the condition on the $h$-invariant by a condition on the number of variables, at the expense of assuming that the coefficients of the linear forms are in `general position'.

\begin{thm} \label{thm2} Let $C$ be a cubic form with rational coefficients in $n$ variables. Let $L_1, \ldots, L_r$ be linear forms in $n$ variables, with real coefficients that are algebraically independent over $\bQ$. Assume that
\[
n > 16 + 9 r.
\]
Let $\btau \in \bR^r$ and $\eta > 0$. Then there exists $\bx \in \bZ^n$ satisfying \eqref{system}.
\end{thm}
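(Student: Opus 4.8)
The plan is to prove Theorem~\ref{thm2} by splitting on the $h$-invariant $h = h(C)$, the natural threshold being the hypothesis $h > 16 + 8r$ of Theorem~\ref{thm1}; throughout, write $\Theta$ for the $r \times n$ coefficient matrix of $\bL$.

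Suppose first that $h > 16 + 8r$. I would deduce the result directly from Theorem~\ref{thm1}, the only thing to verify being that algebraic independence of the entries of $\Theta$ implies the irrationality hypothesis of that theorem. Indeed, if $\balp \cdot \bL$ were a rational form for some $\balp \in \bR^r \setminus \{\bzero\}$, then equating coefficients of $x_1, \dots, x_n$ gives $\Theta^{T}\balp \in \bQ^n$; since some $r \times r$ minor of $\Theta$ is a nonzero polynomial in its entries (hence nonzero), one may solve for $\balp$ rationally in terms of the entries of $\Theta$ and substitute into a further coefficient equation --- one exists, as $n > r$ --- to obtain a nontrivial polynomial relation over $\bQ$ among algebraically independent numbers, which is absurd unless $\balp = \bzero$. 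Granting this, Theorem~\ref{thm1} gives \eqref{asymp} with $\fS\chi_w > 0$, and since $n - r - 3 > 0$ (as $n > 16 + 9r$) this forces $N_w(P) \to \infty$; in particular \eqref{system} has a solution $\bx \in \bZ^n \cap (-P,P)^n$ once $P$ is large.

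Now suppose $h \le 16 + 8r$. Since $n - h$ is the greatest dimension of a rational linear space inside $\{C = 0\}$, fix such a space $W$, so that $\dim W = n - h \ge n - 16 - 8r \ge r + 1$, the last step using $n > 16 + 9r$. Pick $\bx_0 \in W \cap \bZ^n$ and an integer matrix $M$ of rank $n - h$ whose columns generate the lattice $(W - \bx_0) \cap \bZ^n$, so that $C(\bx_0 + M\bt) = 0$ for every $\bt \in \bZ^{n-h}$. Writing $\tilde\bL(\bt) = \bL(M\bt) = (\Theta M)\bt$, it suffices to find $\bt \in \bZ^{n-h}$ with $|\tilde\bL(\bt) - (\btau - \bL(\bx_0))| < \eta$, and for this it is enough that the subgroup $\Gamma = \{\tilde\bL(\bt) : \bt \in \bZ^{n-h}\}$ be dense in all of $\bR^r$ --- density merely modulo one, which is the most that the weaker hypothesis of Theorem~\ref{thm1} would provide, does not suffice here. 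By the structure theory of closed subgroups of $\bR^r$ (equivalently, the sharp form of Kronecker's theorem), $\Gamma$ is dense in $\bR^r$ precisely when there is no $\bu \in \bR^r \setminus \{\bzero\}$ for which $\bu \cdot \tilde\bL$ has all integer coefficients as a form in $\bt$.

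Establishing this density is the crux, and I expect it to be the main obstacle, since the lattice $M$ is dictated by the arbitrary cubic $C$ and the real work is to push the algebraic independence of the entries of $\Theta$ through $M$. Suppose such a $\bu$ existed. For algebraically independent entries, $\tilde\Theta := \Theta M$ has rank $r$: its $r \times r$ minors are polynomials in the entries of $\Theta$ that do not all vanish identically, because for any $r$ linearly independent columns $M_0$ of $M$ one can choose $\Theta$ with $\Theta M_0 = I_r$. So, after reordering the columns of $M$, we may take $\tilde\Theta_0 := \Theta M_0$ invertible, where $M_0$ now denotes the first $r$ columns of $M$; the integrality constraints from these columns force $\tilde\Theta_0^{T}\bu = \bn$ for some $\bn \in \bZ^r \setminus \{\bzero\}$, and the constraint from the $(r+1)$-st column $\bv$ of $\tilde\Theta$ (available since $\dim W \ge r + 1$) then reads $(\tilde\Theta_0^{-1}\bv) \cdot \bn \in \bZ$. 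I would rule this out by a Zariski-density argument: for each fixed $\bn \in \bZ^r \setminus \{\bzero\}$ and $N \in \bZ$, clearing denominators turns $(\tilde\Theta_0^{-1}\bv) \cdot \bn = N$ into a polynomial identity in the entries of $\Theta$ that is nontrivial --- restricting to $\Theta$ with $\Theta M_0 = I_r$ it reads $(\Theta M^{(r+1)}) \cdot \bn = N$, and since the $(r+1)$-st column $M^{(r+1)}$ of $M$ is not in the column span of $M_0$ (this is where $\dim W > r$ enters), the quantity $(\Theta M^{(r+1)}) \cdot \bn$ is non-constant in $\Theta$ on this slice as $\bn \ne \bzero$. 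Thus every such identity is nontrivial, and by algebraic independence the entries of $\Theta$ lie off each of its (countably many) zero sets; hence no such $\bu$ exists, $\Gamma$ is dense in $\bR^r$, and Theorem~\ref{thm2} follows.
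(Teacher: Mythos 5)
Your proposal is correct, and its overall skeleton matches the paper's: split on whether $h > 16 + 8r$ (in which case verify, exactly as the paper does, that algebraic independence of the coefficients forces the irrationality hypothesis, and invoke Theorem \ref{thm1} with $\fS\chi_w > 0$), and otherwise exploit a rational linear space of dimension $n-h \ge r+1$ inside $\{C=0\}$ and show that $\bL$ restricted to its integer points is dense in $\bR^r$. The difference lies in how that density is proved. The paper proceeds by explicit coordinate reductions: it restricts $\bL$ to an integer basis of the subspace $\{A_1=\cdots=A_h=0\}$, proves in two lemmas that algebraic independence of the coefficient matrix survives multiplication by the rational change-of-basis matrix and the row reduction to the shape $(I\,|\,\Lam'')$, and then gets density from equidistribution modulo one of $\bL''(\bN^{n-h-r})$ via a one-dimensional Weyl criterion (the identity block supplying the integer translates). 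You instead invoke the dual (Kronecker-type) criterion: the subgroup $\Theta M\bZ^{n-h}$ is dense in $\bR^r$ unless some $\bu \ne \bzero$ has $M^{T}\Theta^{T}\bu \in \bZ^{n-h}$, and you kill any such $\bu$ by clearing denominators to obtain an integer-coefficient polynomial relation in the entries of $\Theta$, shown nontrivial by specializing to the slice $\Theta M_0 = I_r$ and using that some column of $M$ lies outside the column span of $M_0$. This is a clean alternative that replaces the paper's two algebraic-independence lemmas and the Weyl criterion with a single duality-plus-nonvanishing argument; it does presuppose the structure theorem for closed subgroups of $\bR^r$, which the paper avoids. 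Two trivial points of bookkeeping: after reordering so that $\Theta M_0$ is invertible, the column of $M$ outside the span of $M_0$ need not be the $(r+1)$-st one specifically, but some column beyond the first $r$ works and your argument applies to it verbatim; and your remark that density modulo one ``does not suffice'' is true for the unreduced system, though the paper's normalization $(I\,|\,\Lam'')$ is precisely what lets it get away with density modulo one of the reduced forms.
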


The point of this work is to show that the zeros of a rational cubic form are, in a strong sense, well distributed. Similar methods may be applied if $C$ is replaced by a higher degree form; the simplest results would concern non-singular forms of odd degree. The unweighted analogue of the case $r = 0$ of Theorem \ref{thm1} has been solved, assuming only that \mbox{$h \ge 16$}; see remark (B) in the introduction of Schmidt's paper \cite{Sch1985}. For the case \mbox{$r = 0$} of Theorem \ref{thm2}, we can choose $\bx = \bzero$, or note from \cite{HB2007} that fourteen variables suffice to ensure a nontrivial solution. We shall assume throughout that $r \ge 1$.

The fact that we have linear inequalities rather than equations does genuinely increase the difficulty of the problem. For example, suppose we wished to nontrivially solve the system of equations $C = L_1 = \ldots = L_r = 0$, where here the $L_i$ are linear forms with rational coefficients; assume for simplicity that the $L_i$ are linearly independent. Using the linear equations, we could determine $r$ of the variables in terms of the remaining $n-r$ variables, and substituting into $C(\bx) = 0$ would yield a homogeneous cubic equation in $n-r$ variables. Thus, by Heath-Brown's result \cite{HB2007}, we could solve the system given \mbox{$n \ge 14 + r$} variables.

We use the work of Browning, Dietmann and Heath-Brown \cite{BDHB2014} as a benchmark for comparison. Those authors investigate simultaneous rational solutions to one cubic equation $C=0$ and one quadratic equation $Q=0$. They establish the smooth Hasse principle under the assumption that
\[
\min(h(C), \rank(Q)) \ge 37.
\]
We expect to do somewhat better when considering one cubic equation and one linear inequality simultaneously, and we do. Substituting $r=1$ into \eqref{numvars}, we see that we only require $h(C) > 24$.

To prove Theorem \ref{thm1}, we use the Hardy--Littlewood method \cite{Vau1997} in unison with Freeman's variant \cite{Fre2002} of the Davenport--Heilbronn method \cite{DH1946}. The central objects to study are the weighted exponential sums
\[
S(\alp_0, \balp) = \sum_{\bx \in \bZ^n} w(\bx / P) e(\alp_0 C(\bx) + \balp \cdot \bL(\bx)).
\]
If $|S(\alp_0, \balp)|$ is substantially smaller than the trivial estimate $O(P^n)$, then we may adapt \cite[Lemma 4]{DL1964} to rationally approximate $\alp_0$ (see \S \ref{OneRational}). 

In \S \ref{Poisson}, we use Poisson summation to approximately decompose our exponential sum into archimedean and nonarchimedean components. In \S \ref{MoreRational}, we use Heath-Brown's first derivative bound \cite[Lemma 10]{HB1996} and a classical pruning argument \cite[Lemma 15.1]{Dav2005} to essentially obtain good simultaneous rational approximations to $\alp_0$ and $\balp$. In \S \ref{MVE}, we combine classical ideas with Heath-Brown's first derivative bound to obtain a mean value estimate of the correct order of magnitude. In \S \ref{DHmethod}, we define our Davenport--Heilbronn arcs, and in particular use the methods of Bentkus, G\"otze and Freeman \cite{BG1999,Fre2002,Woo2003} to obtain nontrivial cancellation on the minor arcs, thereby establishing the asymptotic formula \eqref{asymp}. We complete the proof of Theorem \ref{thm1} in \S \ref{positivity} by explaining why $\fS$ and $\chi_w$ are positive. It is then that we justify the interpretation of $\chi_w$ as a weighted real density.

We prove Theorem \ref{thm2} in \S \ref{general}. By Theorem \ref{thm1}, it suffices to consider the case where the $h$-invariant is not too large. With $A_1, \ldots, A_h$ as in \eqref{hdef}, we solve the system \eqref{system} by solving the linear system
\[
\bA(\bx) = \bzero, \qquad |\bL(\bx) - \btau| < \eta.
\]
Since the $h$-invariant is not too large, this system has more variables than constraints, and can be solved using methods from linear algebra and diophantine approximation, providing that the coefficients of $L_1, \ldots, L_r$ are algebraically independent over $\bQ$.

In \S \ref{equidistribution}, we shall prove the following equidistribution result.
\begin{thm} \label{thm3}
Let $C$ be a cubic form with rational coefficients in $n$ variables, let $h = h(C)$ be the $h$-invariant of $C$, and assume that $h > 16$. Let $r \in \bN$, and let $L_1, \ldots, L_r$ be linear forms with real coefficients in $n$ variables such that if $\balp \in \bR^r \setminus \{ \bzero \}$ then $\balp \cdot \bL$ is not a rational form. Let
\[
Z = \{ \bx \in \bZ^n: C(\bx) = 0 \},
\]
and order this set by height $|\bx|$. Then the values of $\bL(Z)$ are equidistributed modulo one in $\bR^r$.
\end{thm}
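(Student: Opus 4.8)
The plan is to establish the equidistribution via Weyl's criterion, fed by a circle-method treatment of the relevant exponential sum. By Weyl's criterion and a routine comparison of the sharp cutoff $|\bx| \le P$ with a smooth compactly supported weight such as \eqref{wdef}, it suffices to prove that, for every fixed $\bk \in \bZ^r \sm \{\bzero\}$,
\[
T(\bk;P) := \sum_{\substack{\bx \in \bZ^n \\ C(\bx) = 0}} w(\bx/P)\, e(\bk \cdot \bL(\bx)) = o(P^{n-3}),
\]
since the corresponding denominator $\sum_{C(\bx) = 0} w(\bx/P)$ is $\asymp P^{n-3}$ by the $r = 0$ analysis available for $h \ge 16$ (Schmidt \cite{Sch1985}). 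Detecting $C(\bx) = 0$ via $\int_0^1 e(\alpha C(\bx))\, \d\alpha$ yields $T(\bk;P) = \int_0^1 S(\alpha,\bk)\, \d\alpha$, where $S(\alpha,\bk) = \sum_{\bx} w(\bx/P) e(\alpha C(\bx) + \bk \cdot \bL(\bx))$. I would dissect the unit interval into major arcs $\fM$, comprising those $\alpha$ with $|\alpha - a/q| \le QP^{-3}$ for some coprime $a, q$ with $1 \le q \le Q := P^\delta$ (a small parameter $\delta > 0$), and minor arcs $\fm$ consisting of the rest. The structural point is that, since $\bk$ is \emph{fixed}, this is a classical circle-method situation requiring no Davenport--Heilbronn treatment of the linear variables, which is why Schmidt's threshold $h > 16$ for the cubic alone suffices here in place of the stronger $h > 16 + 8r$ of Theorem \ref{thm1}.

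On the minor arcs the linear phase is harmless: under the Weyl differencing used to estimate the cubic exponential sum, $\bk \cdot \bL(\bx + \bh) - \bk \cdot \bL(\bx) = \bk \cdot \bL(\bh)$ is independent of $\bx$ and disappears after one difference. Thus the bound for $S(\alpha, \bk)$ on $\fm$ is inherited verbatim from the minor-arc estimate for a cubic of $h$-invariant $h > 16$ (Schmidt \cite{Sch1985}; see also \cite{Dav2005}), giving $\int_{\fm} |S(\alpha,\bk)|\, \d\alpha = o(P^{n-3})$.

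On the major arcs I would apply Poisson summation as in \S \ref{Poisson} (specialised to the integer vector $\bk$): for $\alpha = a/q + \beta \in \fM$,
\[
S(\alpha, \bk) = q^{-n} \sum_{\bm \in \bZ^n} S_{a,q}(\bm)\, J(\bm/q;\beta), \qquad S_{a,q}(\bm) = \sum_{\bc \mmod q} e_q\bigl(a C(\bc) + \bm \cdot \bc\bigr),
\]
where $J(\bgam;\beta) = \int_{\bR^n} w(\bx/P)\, e\bigl(\beta C(\bx) + \bk \cdot \bL(\bx) - \bgam \cdot \bx\bigr)\, \d\bx$. After rescaling $\bx = P\by$, the phase of $J(\bgam;\beta)$ is $\beta P^3 C(\by) + P(\bc - \bgam) \cdot \by$, where $\bc$ is the coefficient vector of $\bk \cdot \bL$ — nonzero and \emph{irrational}, since $\bk \cdot \bL$ is not a rational form — while $|\beta P^3| \le Q = o(P)$ on $\fM$. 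For $\bgam = \bzero$ the gradient of this phase is $\gg P$ throughout the support of $w$, so integrating by parts repeatedly gives $J(\bzero;\beta) \ll_N P^{n-N}$ for every $N$, whence the principal term contributes $\ll P^{n-N}\meas(\fM) = o(P^{n-3})$. For $\bm \ne \bzero$ the same non-stationary-phase reasoning shows that $\int_{|\beta| \le QP^{-3}} |J(\bm/q;\beta)|\, \d\beta$ is negligible unless $\bm/q$ lies within $O(QP^{\eps-1})$ of $\bc$; there is at most one such $\bm$ for each $q$, and it occurs only when $q\bc$ is anomalously close to $\bZ^n$. Because $\bc$ is irrational, each $q\bc$ has positive distance to $\bZ^n$, so the least denominator $q_0 = q_0(P)$ for which the exceptional case arises tends to infinity with $P$; feeding the complete-sum bound $|S_{a,q}(\bm)| \ll q^{n-\sigma+\eps}$ — valid with $\sigma = \sigma(h) > 2$ once $h > 16$, the linear twist being again immaterial to the differencing — into the tail $\sum_{q \ge q_0} q^{1-\sigma+\eps}$ shows that the exceptional terms contribute $\ll P^{n-3} q_0^{2-\sigma+\eps} = o(P^{n-3})$, while the remaining $\bm \ne \bzero$ contribute $\ll P^{n-3} Q P^{-\eps N} = o(P^{n-3})$.

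Summing the three ranges gives $T(\bk;P) = o(P^{n-3})$, and Weyl's criterion then delivers Theorem \ref{thm3}. I expect the main obstacle to be the control of the exceptional Poisson terms on the major arcs: the hypothesis guarantees only that $\bk \cdot \bL$ is irrational, not that it is badly approximable, so one must preclude a resonance between exceptionally good rational approximations to $\bc$ and the arithmetic of $C$. This is where the size of the complete exponential sums — governed by the $h$-invariant — matters beyond its classical role in the minor-arc estimate, and where one must harvest cancellation in the $\beta$-integral to avoid spurious logarithmic losses.
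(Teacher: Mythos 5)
Your outline is essentially correct, and its skeleton --- Weyl's criterion, a circle-method dissection in $\alp_0$ alone with the fixed twist $\bk \cdot \bL$ carried along, minor arcs controlled because Weyl differencing annihilates the linear phase (this is Lemma \ref{DL} here, plus the pruning of \cite[Lemma 15.1]{Dav2005}, which is where $h \ge 17$ enters), and a major-arc resonance analysis in which irrationality of $\bk \cdot \bL$ forces resonant denominators to infinity while the complete-sum bound of Lemma \ref{Sbound}, with $h/8 > 2$, kills the tail --- is the same as in \S\ref{equidistribution}. The implementation differs in three respects. First, the paper works with the unweighted sum $S_u(\alp_0,\bk)$ throughout; your reduction to a smooth weight is fine in principle but not as stated, since the single bump \eqref{wdef} is not a majorant/minorant of the box indicator: you need a family $w_\eps^{\pm}$ sandwiching $\mathbf{1}_{[-1,1]^n}$ and Schmidt's count \cite{Sch1985} to bound the zeros in the boundary shell by $O(\eps P^{n-3})$, letting $\eps \to 0$ after $P \to \infty$. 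Second, in place of your Poisson-summation/non-stationary-phase treatment of the major arcs (legitimate for smooth weights), the paper splits the moduli into a non-resonant class $\fQ'$, handled by Kusmin--Landau \cite{GK1991} applied directly to the unweighted sum, and a resonant class $\fQ$, where the sum is compared with $(P/q)^n S_{q,a,\ba} I_u$ via \cite[Lemma 8.1]{Bro2009} (see \eqref{ggstar}); the decay \eqref{Iuineq} in $\bet_0$ supplies exactly the ``cancellation in the $\bet$-integral'' you flag as the delicate point, and it needs only $h > 8$. Third, your observation that the least resonant denominator $q_0(P) \to \infty$ is precisely what the paper encodes as $\cF(\bk;P) = o(1)$ in Corollary \ref{BGF2}, combined with the mean-value estimate \eqref{finalMV}; for a single fixed $\bk$ your direct argument is a bit more elementary, while the paper's formulation reuses machinery already built for Theorem \ref{thm1}. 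In short: your route buys a cleaner stationary-phase picture at the cost of the weight-removal step; the paper's buys directness and recycling of its earlier lemmas.
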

A little surprisingly, perhaps, we do not require $h$ to grow with $r$. By a multidimensional Weyl criterion, it will suffice to investigate $S_u(\alp_0, \bk)$ for a fixed nonzero integer vector $\bk$, where
\[
S_u(\alp_0, \balp) = \sum_{|\bx| < P} e(\alp_0C(\bx) + \balp \cdot \bL(\bx))
\]
is the unweighted analogue of $S(\alp_0, \balp)$. A simplification of the method employed to prove Theorem \ref{thm1} will complete the argument. 

Rather than using the $h$-invariant, one could instead consider the dimension of the \emph{singular locus} of the affine variety $\{ C = 0 \}$, as in Birch's paper \cite{Bir1962}. Such an analysis would imply results for arbitrary nonsingular cubic forms in sufficiently many variables. These types of theorems are discussed in \S \ref{nonsingular}.

We adopt the convention that $\eps$ denotes an arbitrarily small positive number, so its value may differ between instances. For $x \in \bR$ and $q \in \bN$, we put $e(x) = e^{2 \pi i x}$ and $e_q(x) = e^{2 \pi i x / q}$. Bold face will be used for vectors, for instance we shall abbreviate $(x_1,\ldots,x_n)$ to $\bx$, and define $|\bx| = \max(|x_1|, \ldots, |x_n|)$. We will use the unnormalised sinc function, given by $\sinc(x) = \sin(x)/x$ for $x \in \bR \setminus \{0\}$ and $\sinc(0) = 1$. For $x \in \bR$, we write $\| x \|$ for the distance from $x$ to the nearest integer.

We regard $\btau$ and $\eta$ as constants. The word \emph{large} shall mean in terms of $C, \bL, \eps$ and constants, together with any explicitly stated dependence. Similarly, the implicit constants in Vinogradov's and Landau's notation may depend on $C, \bL, \eps$ and constants, and any other dependence will be made explicit. The pronumeral $P$ denotes a large positive real number. The word \emph{small} will mean in terms of $C, \bL$ and constants. We sometimes use such language informally, for the sake of motivation; we make this distinction using quotation marks. 

The author is very grateful towards Trevor Wooley for his energetic supervision.

\section{One rational approximation}
\label{OneRational}

First we use Freeman's kernel functions \cite[\S 2.1]{Fre2002} to relate $N_w(P)$ to our exponential sums $S(\alp_0,\balp)$. We shall define 
\[
T: [1, \infty) \to [1, \infty)
\]
in due course. For now, it suffices to note that 
\begin{equation} \label{Tbound}
T(P) \le P, 
\end{equation}
and that $T(P) \to \infty$ as $P \to \infty$. Put
\begin{equation} \label{Ldef}
L(P) = \max(1,\log T(P)), \qquad \rho = \eta L(P)^{-1}
\end{equation}
and
\begin{equation} \label{Kdef}
K_{\pm}(\alp) = \frac {\sin(\pi \alp \rho) \sin(\pi \alp(2 \eta \pm \rho))} {\pi^2 \alp^2 \rho}. 
\end{equation}
From \cite[Lemma 1]{Fre2002} and its proof, we have
\begin{equation} \label{Kbounds}
K_\pm(\alp) \ll \min(1, L(P) |\alp|^{-2})
\end{equation}
and
\begin{equation} \label{Ubounds}
0 \le \int_\bR e(\alp t) K_{-}(\alp)\d\alp \le U_\eta(t) \le \int_\bR e(\alp t) K_{+}(\alp)\d\alp \le 1,
\end{equation}
where
\begin{equation*}
U_\eta(t) = 
\begin{cases}
1, &\text{if } |t| < \eta \\
0, &\text{if } |t| \ge \eta.
\end{cases}
\end{equation*}

For $\balp \in \bR^r$, write
\begin{equation} \label{Kprod}
\bK_\pm(\balp) = \prod_{k \le r} K_\pm(\alp_k).
\end{equation}
Let $U$ be a unit interval, to be specified later. By rescaling, we may assume that $C$ has integer coefficients. The inequalities \eqref{Ubounds} and the identity
\begin{equation} \label{orth}
\int_U e(\alp m) \d \alp = \begin{cases}
1, &\text{if } m = 0 \\
0, &\text{if } m \in \bZ \setminus \{ 0 \}
\end{cases}
\end{equation}
now give
\[
R_{-}(P)  \le N_w(P) \le R_+(P), 
\]
where 
\[ 
R_\pm(P) = \int_{\bR^r} \int_U S(\alp_0,\balp) e(-\balp \cdot \btau) \bK_\pm(\balp) \d \alp_0 \d \balp. 
\]
In order to prove \eqref{asymp}, it therefore remains to show that
\begin{equation} \label{goal1}
R_\pm(P) = (2 \eta)^r \fS \chi_w P^{n-r-3} + o(P^{n-r-3}).
\end{equation}

We shall in fact need to investigate the more general exponential sum
\[
g(\alp_0, \blam) = \sum_{\bx \in \bZ^n} w(\bx / P) e(\alp_0 C(\bx) + \blam \cdot \bx).
\]
We note at once that
\[
S(\alp_0,\balp) = g(\alp_0, \Lam \balp),
\]
where
\begin{equation} \label{lamdef}
L_i(\bx) = \lam_{i,1} x_1 + \ldots + \lam_{i,n} x_n \qquad (1 \le i \le r)
\end{equation}
and
\begin{equation} \label{LamDef}
\Lam = \begin{pmatrix}
\lam_{1,1} & \ldots & \lam_{r,1} \\
\vdots &  & \vdots \\
\lam_{1,n} & \ldots & \lam_{r,n}
\end{pmatrix}.
\end{equation}

Fix a large positive constant $C_1$.

\begin{lemma} \label{DL} If $0 < \tet < 1$ and 
\[
|g(\alp_0, \blam)| \ge P^{n- \frac h4 \tet + \eps}
\]
then there exist relatively prime integers $q$ and $a$ satisfying
\begin{equation} \label{DLapprox}
1 \le q \le C_1 P^{2 \tet}, \qquad |q \alp_0 - a| < P^{2 \tet - 3}.
\end{equation}
The same is true if we replace $g(\alp_0, \blam)$ by 
\begin{equation} \label{gudef}
g_u(\alp_0,\blam) := \sum_{|\bx| < P} e(\alp_0 C(\bx) + \blam \cdot \bx),
\end{equation}
or by
\[
\sum_{1 \le x_1, \ldots, x_n \le P} e(\alp_0C(\bx) + \blam \cdot \bx).
\]
\end{lemma}

\begin{proof} 
Our existence statement is a weighted analogue of \cite[Lemma 4]{DL1964}. One can follow \cite[\S 3]{DL1964}, \emph{mutatis mutandis}. The only change required is in proving the analogue of \cite[Lemma 1]{DL1964}. Weights are introduced into the linear exponential sums that arise from Weyl differencing, but these weights are easily handled using partial summation. Our final statement holds with the same proof: one imitates \cite[\S 3]{DL1964}.
\end{proof}

For $\tet \in (0,1)$, $q \in \bN$ and $a \in \bZ$, let $\fN_{q,a}(\tet)$ be the set of $(\alp_0,\balp) \in U \times \bR^r$ satisfying \eqref{DLapprox}, and let $\fN(\tet)$ be the union of the sets $\fN_{q,a}(\tet)$ over relatively prime $q$ and $a$. This union is disjoint if $\tet < 3/4$. Indeed, suppose we have \eqref{DLapprox} for some relatively prime integers $q$ and $a$, and that we also have relatively prime integers $q'$ and $a'$ satisfying
\[
1 \le q' \ll P^{2\tet}, \qquad |q' \alp_0 - a'| < P^{2\tet-3}.
\]
The triangle inequality then yields
\[
|a/q - a'/q'| < P^{2 \tet - 3} (1/q + 1/q') < 1/(qq').
\]
Hence $a/q = a'/q'$, so $a'=a$ and $q'=q$.

We prune our arcs using the well known procedure in \cite[Lemma 15.1]{Dav2005}. Fix a small positive real number $\del$. The following corollary shows that we may restrict attention to $\fN(1/2 - \del)$. 

\begin{cor} \label{FirstArcs} We have
\[
\int_{U \times \bR^r \setminus \fN(1/2 - \del)} |S(\alp_0, \balp) \bK_\pm (\balp)| \d \alp_0 \d \balp = o(P^{n-r-3}). 
\]
\end{cor}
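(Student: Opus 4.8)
The plan is to combine the pointwise bound of Lemma~\ref{DL} with a pruning of the minor arcs in the style of \cite[Lemma 15.1]{Dav2005}. Since $\fN(\tet)$ places a condition on $\alp_0$ only, write $\fN_0(\tet)$ for its projection to the $\alp_0$-axis, so that $\fN(\tet)=\fN_0(\tet)\times\bR^r$ and $U\times\bR^r\sm\fN(1/2-\del)=\bigl(U\sm\fN_0(1/2-\del)\bigr)\times\bR^r$. By \eqref{Kbounds} we have $\int_{\bR^r}|\bK_\pm(\balp)|\d\balp\ll L(P)^{r/2}\ll P^\eps$, and $S(\alp_0,\balp)=g(\alp_0,\Lam\balp)$; moreover the hypothesis and conclusion of Lemma~\ref{DL} are uniform in $\blam$. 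So, by Fubini, it suffices to prove that
\[
\int_{U\sm\fN_0(1/2-\del)}|g(\alp_0,\blam)|\,\d\alp_0\ll P^{n-1-\frac h8+c(\del)+\eps}
\]
uniformly in $\blam\in\bR^r$, where $c(\del)\to0$ as $\del\to0$. Granting this, the corollary follows: the hypothesis $h>16+8r$ forces $\frac h8>r+2$, so choosing $\del$ and then $\eps$ small (in terms of $C$), the exponent above --- even after multiplying by $\int_{\bR^r}|\bK_\pm|$ --- is strictly less than $n-r-3$.

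To establish the displayed bound I would use two inputs. First, from \eqref{DLapprox}, counting the fractions $a/q$ with $1\le q\le C_1P^{2\tet}$ and estimating the length $2P^{2\tet-3}/q$ of each associated interval, one gets $\meas(\fN_0(\tet))\ll\min(1,P^{4\tet-3})$ for every $\tet>0$; moreover $\fN_0(\tet)=U$ once $\tet\ge3/2$, so for $\alp_0\in U\sm\fN_0(1/2-\del)$ the infimum $\tet^*(\alp_0)$ of those $\tet$ with $\alp_0\in\fN_0(\tet)$ lies in $[1/2-\del,3/2]$. Second, the contrapositive of Lemma~\ref{DL}: if $\alp_0\notin\fN_0(\tet)$ with $0<\tet<1$, then $|g(\alp_0,\blam)|\ll P^{n-\frac h4\tet+\eps}$ for all $\blam$. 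Now partition $U\sm\fN_0(1/2-\del)$ into the $O(1/\del)$ shells $\fN_0(\tet_j)\sm\fN_0(\tet_{j-1})$, where $\tet_0=1/2-\del$, $\tet_j=\tet_{j-1}+\del$ and $\tet_M\ge3/2$. On the $j$-th shell, $\alp_0\in\fN_0(\tet_j)$ gives $\meas\ll\min(1,P^{4\tet_j-3})$, while $\alp_0\notin\fN_0(\tet_{j-1})$ allows us to invoke the second input with $\tet=\min(\tet_{j-1},1-\del)$, so that $|g(\alp_0,\blam)|\ll P^{n-\frac h4\min(\tet_{j-1},1-\del)+\eps}$ throughout the shell. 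Multiplying these estimates and summing over $j$, the dominant term comes from the innermost shell $\tet_{j-1}\approx1/2$, where the measure is $\asymp P^{-1}$ and the pointwise factor is $\asymp P^{n-h/8}$; since $h>16$ makes the coefficient $4-\frac h4$ negative, the remaining shells contribute less (for instance, once $\tet_{j-1}\ge3/4$ the pointwise factor is already $\ll P^{n-3h/16+\eps}$, and once $\tet_{j-1}\ge1$ it is $\ll P^{n-h/4+\eps}$, both $\ll P^{n-1-h/8+\eps}$ because $h\ge16$). This yields the displayed bound.

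The point to emphasise is that the crude estimate $\bigl(\sup_{U\sm\fN_0(1/2-\del)}|S|\bigr)\cdot\int_{\bR^r}|\bK_\pm|$ is \emph{not} enough: Lemma~\ref{DL} only gives $|S|\ll P^{n-h/8+c(\del)+\eps}$ on the minor arcs, which is $o(P^{n-r-3})$ only when $h>8r+24$, whereas we are allowed only $h>16+8r$. The missing power of $P$ is exactly what the pruning recovers, through the fact that the part of $U\sm\fN_0(1/2-\del)$ lying closest to the excised set $\fN_0(1/2-\del)$ has measure $O(P^{-1+c(\del)})$ rather than $O(1)$; this is the main obstacle, in the sense that a careless estimate fails by precisely one power of $P$. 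The only other care needed is bookkeeping near $\tet\approx3/2$, where $\fN_0(\tet)=U$ so the bound $\meas\ll P^{4\tet-3}$ is vacuous and Lemma~\ref{DL} is unavailable for $\tet\ge1$; there one simply notes that $|g|$ is already as small as $P^{n-h/4+\eps}$, which is amply sufficient.
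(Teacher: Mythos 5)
Your proof is correct and follows essentially the same route as the paper: the contrapositive of Lemma \ref{DL} applied on nested shells $\fN(\tet_j)\setminus\fN(\tet_{j-1})$, the measure bound $\ll P^{4\tet-3}$, and the integrability of $\bK_\pm$ coming from \eqref{Kbounds} to control the $\balp$-directions, with the same exponent arithmetic forced by $h \ge 17+8r$. The only cosmetic differences are that you apply Fubini first and prove a bound on $\int |g(\alp_0,\blam)|\,\d\alp_0$ uniform in $\blam$ instead of working on unit hypercubes $U\times\fU$, and you exhaust $U$ by taking $\tet$ up to $3/2$ (via $q=1$ and monotonicity of $\fN_0$) where the paper invokes Dirichlet's theorem at $\tet=0.8$.
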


\begin{proof}
Choose real numbers $\psi_1, \ldots, \psi_{t-1}$ such that
\[
1/2 - \del = \psi_0 < \psi_1 < \ldots < \psi_{t-1} < \psi_t = 0.8.
\]
Dirichlet's approximation theorem \cite[Lemma 2.1]{Vau1997} implies that \mbox{$\fN(\tet_t) = U \times \bR^r$.} Let $\fU$ be an arbitrary unit hypercube in $r$ dimensions, and put $\cU = U \times \fU$. Since
\[
\meas(\fN(\tet) \cap \cU) \ll P^{4 \tet-3},
\]
Lemma \ref{DL} gives
\[
\int_{(\fN(\psi_g) \setminus \fN(\psi_{g-1})) \cap \cU} |S(\alp_0,\balp)| \d \alp_0 \d \balp 
\ll   P^{4 \psi_g - 3 + n - h \psi_{g-1}/4 + \eps} \qquad (1 \le g \le t).
\]
This is $O(P^{n-r-3-\eps})$ if $\psi_{g-1} / \psi_g \simeq 1$, since $\psi_{g-1} \ge 1/2 - \del$ and $h \ge 17 + 8r$. Thus, we can choose $\psi_1, \ldots, \psi_{t-1}$ with $t \ll 1$ satisfactorily, to ensure that
\[
\int_{\cU \setminus \fN(1/2 - \del)} |S(\alp_0,\balp)| \d \alp_0 \d \balp \ll P^{n-r-3-\eps}.
\]
The desired inequality now follows from \eqref{Tbound}, \eqref{Ldef}, \eqref{Kbounds} and \eqref{Kprod}. 
\end{proof}

Thus, to prove \eqref{goal1} and hence \eqref{asymp}, it remains to show that
\begin{equation} \label{goal2}
\int_{\fN(1/2-\del)} S(\alp_0,\balp) e(-\balp \cdot \btau) \bK_\pm(\balp) \d \alp_0 \d \balp =  (2 \eta)^r \fS \chi_w P^{n-r-3} + o(P^{n-r-3}).
\end{equation}

\section{Poisson summation}
\label{Poisson}

Put
\begin{equation} \label{put}
\alp_0 = \frac a q + \bet_0, \qquad \blam = q^{-1} \ba + \bbet,
\end{equation}
where $q \ge 1$ and $a$ are relatively prime integers, and where $\ba \in \bZ^n$. By periodicity, we have
\[
g(\alp_0,\blam) = \sum_{\by \mmod q} e_q (aC(\by) + \ba \cdot \by) I_\by (q,\bet_0,\bbet),
\]
where
\[
I_\by(q,\bet_0,\bbet) = \sum_{\bx \equiv \by \mmod q} w(\bx/P) e(\bet_0 C(\bx) + \bbet \cdot \bx).
\]
Since
\[
I_\by(q,\bet_0,\bbet) = \sum_{\bz \in \bZ^n} w \Bigl(\frac{\by + q \bz}P \Bigr) 
e(\bet_0 C(\by + q \bz) + \bbet \cdot (\by + q \bz)),
\]
Poisson summation yields
\[
I_\by(q,\bet_0,\bbet) = \sum_{\bc \in \bZ^n} \int_{\bR^n} w \Bigl(\frac{\by + q \bz}P \Bigr) 
e(\bet_0 C(\by + q \bz) + \bbet \cdot (\by + q \bz) - \bc \cdot \bz) \d\bz.
\]
Changing variables now gives
\[
I_\by(q,\bet_0,\bbet) = (P/q)^n \sum_{\bc \in \bZ^n} e_q(\bc \cdot \by) 
I(P^3 \bet_0, P(\bbet - \bc / q)),
\]
where
\begin{equation} \label{intdef}
I(\gam_0, \bgam) = 
\int_{\bR^n} w (\bx) e(\gam_0 C(\bx) + \bgam \cdot \bx) \d \bx.
\end{equation}

Write 
\begin{equation} \label{sumdef}
S_{q,a,\ba} = \sum_{\by \mmod q} e_q (aC(\by) + \ba \cdot \by),
\end{equation}
and let
\[
g_0(\alp_0, \blam) = (P/q)^n S_{q,a,\ba} I(P^3 \bet_0, P\bbet)
\]
be the $\bc = \bzero$ contribution to $g(\alp_0, \blam)$. Then
\begin{equation} \label{PS1}
g(\alp_0, \blam) - g_0(\alp_0, \blam)
= (P/q)^n \sum_{\bc \ne \bzero} S_{q,a,\ba + \bc} I(P^3 \bet_0, P(\bbet - \bc / q)).
\end{equation}
We shall bound the right hand side from above, in the case where we have \eqref{DLapprox} and $|\bbet| \le 1/(2q)$. For future reference, we note that specialising \mbox{$(q,a,\ba) = (1,0,\bzero)$} in \eqref{PS1} gives
\begin{equation} \label{PS2}
g(\alp_0, \blam) - P^n I(P^3 \alp_0, P \blam) = P^n \sum_{\bc \ne \bzero} I(P^3 \alp_0, P\blam - P\bc).
\end{equation}

We bound $S_{q,a,\ba}$ by imitating \cite[Lemma 15.3]{Dav2005}.

\begin{lemma} \label{Sbound}
Let $q \ge 1$ and $a$ be relatively prime integers, and let $\psi > 0$. Then
\begin{equation} \label{Sineq}
S_{q,a,\ba} \ll_\psi q^{n-h/8+\psi}.
\end{equation}
\end{lemma}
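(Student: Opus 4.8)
The plan is to reduce the bound on the complete exponential sum $S_{q,a,\ba}$ to a statement about how many solutions, modulo $q$, the system of partial derivative equations of $C$ has, and then to translate the $h$-invariant condition into such a count. First I would recall the standard multiplicativity: writing $q = \prod_p p^{t_p}$ and using the Chinese Remainder Theorem, $S_{q,a,\ba}$ factors as a product of prime-power sums $S_{p^{t_p}, a_p, \ba_p}$, so it suffices to prove \eqref{Sineq} for $q = p^t$ a prime power (with the exponent $\psi$ absorbing the finitely-many-primes overhead). This is exactly the shape of the argument in \cite[Lemma 15.3]{Dav2005}, which I would follow \emph{mutatis mutandis}.

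The core of Davenport's Lemma 15.3 is a Weyl-differencing / Hua-type estimate for the cubic Weyl sum over $\bZ/p^t\bZ$: squaring (or rather taking the sum twice and differencing in the style used for cubic forms) reduces $|S_{p^t,a,\ba}|^2$ to a sum over a difference variable of linear-form sums governed by the bilinear forms $\nabla^2 C$, and iterating one more time brings in the number of solutions modulo a power of $p$ of the system $\nabla C(\bx) \equiv \bzero$. The key geometric input is that the affine variety cut out by the first partial derivatives of $C$ — equivalently the singular locus of the projective cubic $\{C=0\}$ together with the origin — has dimension bounded in terms of $h$: specifically, a cubic form of $h$-invariant $h$ has $\nabla C$ vanishing only on a subvariety of affine dimension at most $n - h/4$ or so (this is the standard relationship, used repeatedly by Davenport and by Schmidt, between $h(C)$ and the dimension of the singular locus). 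Feeding this dimension bound into the counting of solutions of $\nabla C(\bx)\equiv \bzero \pmod{p^k}$ via Hensel-type lifting / the Lang–Weil or elementary point-counting estimates yields the power saving: each differencing step costs a factor $p^{n}$ but saves roughly $p^{h/4}$ from the constrained variety, and after the appropriate number of iterations one arrives at $S_{p^t,a,\ba} \ll p^{t(n - h/8)}$, the exponent $h/8$ (rather than $h/4$) being the usual loss from taking square roots after the differencing.

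I would then assemble the prime-power bounds: since the exponent $n - h/8$ is the same for every prime and the implied constants can be made uniform (with any excess swept into the $p^{t\psi}$ factor, summing the logarithmic loss over primes dividing $q$), multiplicativity gives $S_{q,a,\ba} \ll_\psi q^{n - h/8 + \psi}$ for all $q$. A minor point to check is that the bound is genuinely uniform in $a$ and $\ba$ — but the differencing argument never uses coprimality of $\ba$ with $q$, only coprimality of $a$ with $q$ (which ensures the quadratic differenced forms are nondegenerate modulo $p$), so this is automatic.

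The main obstacle I anticipate is making the dimension-of-the-singular-locus input precise and getting the constant in the exponent to come out as exactly $h/8$: one has to track carefully how the $h$-invariant controls the rank of the pencil of second-derivative bilinear forms $\sum_i u_i \partial_i \nabla C$, translate that into a bound on the number of $\bx \pmod{p^k}$ with $\nabla C(\bx) \equiv \bzero$, and then verify that the arithmetic of the two differencing steps (one over $\bZ$, one feeding into the congruence count) produces the claimed saving without loss. Since this is precisely what \cite[Lemma 15.3]{Dav2005} does for the analogous sum, I expect the proof to be a careful transcription of that argument — hence the terse "by imitating \cite[Lemma 15.3]{Dav2005}" in the statement — with the only real verification being that Davenport's hypotheses there match the present normalisation of the $h$-invariant.
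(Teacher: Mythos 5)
There is a genuine gap, and it sits exactly where you flag your own unease: the ``standard relationship'' you invoke between the $h$-invariant and the singular locus goes the wrong way. What is known (and is recorded in \S \ref{nonsingular} of this paper) is that the singular locus $\cS = \{\nabla C = 0\}$ \emph{contains} the variety $\{A_1 = \ldots = A_h = B_1 = \ldots = B_h = 0\}$, so $\dim \cS \ge n - 2h$; that is, a large $h$-invariant gives no upper bound whatever on $\dim \cS$ of the shape $\dim \cS \le n - h/4$ that your differencing-plus-Hensel counting scheme requires. The $h$-invariant is a statement about rational linear spaces inside $\{C=0\}$, not about the codimension of the locus $\nabla C = 0$, and no inequality in your direction is proved or cited in the paper (nor is one standard). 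Without it, the proposed bound on the number of solutions of $\nabla C(\bx) \equiv \bzero \pmod{p^k}$, and hence the saving of $p^{h/4}$ per differencing step, has no source; the heuristic ``$h/8$ from taking square roots'' also does not identify where the exponent actually comes from. You have additionally mischaracterised \cite[Lemma 15.3]{Dav2005}: it is not a Chinese-Remainder/prime-power argument with geometric point counting, so ``transcribing'' it along the lines you describe is not available.

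The paper's proof (imitating Davenport's Lemma 15.3 in its true form) is a short contradiction argument that recycles Lemma \ref{DL}, whose statement was deliberately extended to cover the complete unweighted sum $\sum_{1 \le x_1, \ldots, x_n \le q}$. View $S_{q,a,\ba}$ as that sum with $P = q$, $\alp_0 = a/q$ and $\blam = \ba / q$, and take $\tet = 1/2 - \psi/n$. If \eqref{Sineq} failed for some large $q$, then $|S_{q,a,\ba}| > q^{\,n - h\tet/4 + \eps}$ (here $h \le n$ is used), so Lemma \ref{DL} would furnish coprime integers $s, b$ with $1 \le s \le C_1 q^{1 - 2\psi/n} < q$ and $|s a/q - b| < q^{2\tet - 3} < q^{-1}$, forcing $a/q = b/s$ with $s < q$, contradicting $(a,q) = 1$; small $q$ are absorbed into the constant $\ll_\psi$. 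This also explains the exponent: $h/8$ is just $h\tet/4$ with $\tet$ pushed to $1/2$, the largest value for which the produced denominator stays below $q$. Your CRT reduction is harmless but unnecessary, and in any case cannot rescue the missing singular-locus input.
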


\begin{proof} Suppose for a contradiction that $q$ is large in terms of $\psi$ and
\[
|S_{q,a,\ba}| > q^{n-h/8+\psi}.
\]
Recall that 
\[
S_{q,a,\ba} = \sum_{1 \le y_1, \ldots, y_n \le q} e_q(aC(\by) + \ba \cdot \by).
\]
We may assume without loss that $\psi < 1$. By Lemma \ref{DL}, with $P=q$ and $\tet = 1/2 - \psi/n$, there exist $s,b \in \bZ$ such that 
\[
1 \le s < q, \qquad |sa/q - b| < q^{-1}.
\]
Now $b/s = a/q$, which is impossible because $(a,q) = 1$ and $1 \le s < q$.
\end{proof}

Let $\bc \in \bZ^n \setminus \{ \bzero \}$, and suppose we have \eqref{DLapprox}, for some $\tet \in (0, 1/2 - \del]$ and some relatively prime $q,a \in \bZ$. Define $a_j$ by rounding $q \lam_j$ to the nearest integer, rounding down if $q \lam_j$ is half of an odd integer ($1 \le j \le n$). Since $|\bc|/q \ge 1/q \ge 2|\bbet|$, we have
\[
|P(\bbet - \bc/q)| \gg P|\bc| / q \gg P/q \gg P^{3+2 \del} |\bet_0|.
\]
Now \cite[Lemma 10]{HB1996} gives
\begin{equation} \label{Ibound0}
I(P^3 \bet_0, P(\bbet - \bc / q)) \ll (P|\bc|/q)^{-n-\eps}.
\end{equation}
By \eqref{PS1}, \eqref{Sineq} and \eqref{Ibound0}, we have
\begin{equation} \label{PS3}
g(\alp_0,\blam) - g_0(\alp_0,\blam) \ll P^{-\eps} q^{n+2\eps - h/8} \ll q^{n - h/8 + \eps}.
\end{equation}

Let $\fU$ be an arbitrary unit hypercube in $r$ dimensions, and put $\cU = U \times \fU$. With $\tet = 1/2 - \del$, we now have
\[
\int_{\fN(\tet) \cap \cU} |S(\alp_0,\balp) - S_0(\alp_0,\balp)| \d \alp_0 \d \balp
\ll \sum_{q \le C_1 P^{2\tet}} q^{n - h/8 + \eps} P^{2\tet - 3},
\]
where for $(\alp_0,\balp) \in \fN_{q,a}(\tet)$ with $(a,q)=1$ we have written 
\[
S_0(\alp_0,\balp) = g_0(\alp_0, \Lam \balp).
\]
Hence
\[
\int_{\fN(\tet) \cap \cU} |S(\alp_0,\balp) - S_0(\alp_0,\balp)| \d \alp_0 \d \balp
\ll P^{4\tet - 3} (P^{2\tet})^{n-h/8 + \eps} \ll P^{n-r-3-\eps},
\]
since $h \ge 17 + 8r$. The bounds \eqref{Tbound}, \eqref{Ldef}, \eqref{Kbounds} and \eqref{Kprod} now yield
\[
\int_{\fN(\tet)} |S(\alp_0,\balp) - S_0(\alp_0,\balp)| \cdot |\bK_\pm(\balp)| \d \alp_0 \d \balp = o(P^{n-3}).
\]
Thus, to prove \eqref{goal2} and hence \eqref{asymp}, it suffices to show that
\begin{equation} \label{goal3}
\int_{\fN(1/2-\del)} S_0(\alp_0,\balp) e(-\balp \cdot \btau) \bK_\pm(\balp) \d \alp_0 \d \balp 
= (2 \eta)^r \fS \chi_w P^{n-r-3} + o(P^{n-r-3}).
\end{equation}

\section{More rational approximations}
\label{MoreRational}

For $\tet \in (0,1/2]$ and integers $q,a,a_1,\ldots,a_n$, let $\fR_{q,a,\ba}(\tet)$ denote the set of $(\alp_0,\balp) \in U \times \bR^r$ satisfying
\begin{equation} \label{R1}
|q\alp_0 - a| < P^{2 \tet - 3}, \qquad |q \Lam \balp - \ba| < P^{(2+\del) \tet - 1},
\end{equation}
and let $\fR(\tet)$ be the union of the sets $\fR_{q,a,\ba}(\tet)$ over integers $q,a,a_1,\ldots,a_n$ satisfying
\begin{equation} \label{R2}
1 \le q \le C_1 P^{2\tet}, \qquad (a,q) = 1.
\end{equation}
Note that this union is disjoint if $\tet < (2+\del)^{-1}$. Let $\fU$ be an arbitrary unit hypercube in $r$ dimensions, and put $\cU = U \times \fU$. Then
\[
\meas(\fR(\tet) \cap \cU) \ll P^{4 \tet - 3 - r + (2+\del) \tet r},
\]
since our hypothesis on $\bL$ implies that $\Lam$ has $r$ linearly independent rows.

Fix a small positive real number $\tet_0$. The following lemma shows that we may restrict attention to $\fR(\tet_0)$.

\begin{lemma} We have
\[
\int_{\fN(1/2-\del) \setminus \fR(\tet_0)} |S_0(\alp_0,\balp) \bK_\pm(\balp)| \d \alp_0 \d \balp = o(P^{n-r-3}).
\]
\end{lemma}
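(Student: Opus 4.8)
The plan is to exploit the chain of inclusions
\[
\fR(\tet_0) \subseteq \fN(\tet_0) \subseteq \fN(1/2-\del),
\]
the first because discarding the second inequality in \eqref{R1} leaves precisely the defining conditions of $\fN_{q,a}(\tet_0)$, and the second because $\tet_0 < 1/2-\del$. Hence
\[
\fN(1/2-\del)\setminus\fR(\tet_0) = \bigl(\fN(1/2-\del)\setminus\fN(\tet_0)\bigr)\sqcup\bigl(\fN(\tet_0)\setminus\fR(\tet_0)\bigr),
\]
and I would estimate the integral over each of the two pieces in turn. Throughout I shall use that $\sum_{\fU}\sup_{\fU}\abs{\bK_\pm(\balp)} \ll L(P)^{r/2} \ll P^\eps$, by \eqref{Tbound}, \eqref{Ldef}, \eqref{Kbounds}, \eqref{Kprod}, where $\fU$ runs over unit cubes in $\bR^r$.

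On the first piece I would write $\abs{S_0} \le \abs{S} + \abs{S-S_0}$. The contribution of $\abs{S-S_0}$ is $o(P^{n-r-3})$ by the estimate at the end of \S\ref{Poisson} (whose per-hypercube form is $O(P^{n-r-3-\eps})$). For the contribution of $\abs{S}$ I would re-run the dyadic dissection from the proof of Corollary \ref{FirstArcs}, now over the range $\tet\in[\tet_0,1/2-\del]$: choosing $\tet_0=\psi_0<\psi_1<\cdots<\psi_t=1/2-\del$ with $t\ll 1$ and consecutive ratios close to $1$, Lemma \ref{DL} gives $\abs{S(\alp_0,\balp)}\ll P^{n-h\psi_{g-1}/4+\eps}$ on $\fN(\psi_g)\setminus\fN(\psi_{g-1})$, while $\meas(\fN(\psi_g)\cap\cU)\ll P^{4\psi_g-3}$ for a unit hypercube $\cU$, so that the product is $O(P^{n-r-3-\eps})$ once $4\psi_g-h\psi_{g-1}/4<-r$ throughout. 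This holds provided $\tet_0$ lies in the range $(4r/(h-16),\,1/2-\del)$, which is nonempty by \eqref{numvars} once $\del$ is small enough.

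The substance is the second piece $\fN(\tet_0)\setminus\fR(\tet_0)$. A point here lies in a unique $\fN_{q,a}(\tet_0)$, with $q\le C_1P^{2\tet_0}$, $(a,q)=1$, $\abs{q\alp_0-a}<P^{2\tet_0-3}$; since the $\fN(\cdot)$-arcs are disjoint, the same $(q,a)$ realises its membership of $\fN(1/2-\del)$, so the decomposition $\alp_0=a/q+\bet_0$, $\Lam\balp=q^{-1}\ba+\bbet$ of \S\ref{Poisson} applies with $\ba$ the nearest-integer rounding of $q\Lam\balp$. Failure of $\fR(\tet_0)$-membership forces $\abs{q\Lam\balp-\ba}\ge P^{(2+\del)\tet_0-1}$, i.e. $\abs{P\bbet}\ge P^{(2+\del)\tet_0}/q$; combined with $\abs{P^3\bet_0}<P^{2\tet_0}/q$ this makes $\abs{P\bbet}$ exceed $1+\abs{P^3\bet_0}$ by a factor $\gg P^{\del\tet_0}$, so Heath-Brown's first derivative bound \cite[Lemma 10]{HB1996} (as used in \eqref{Ibound0}) gives $I(P^3\bet_0,P\bbet)\ll(\abs{P\bbet})^{-n-\eps}$. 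With Lemma \ref{Sbound} this yields $\abs{S_0(\alp_0,\balp)}\ll P^nq^{-h/8+\eps}(\abs{P\bbet})^{-n-\eps}$ on the relevant part of $\fN_{q,a}(\tet_0)$. I would then integrate: the $\bet_0$-interval has length $\ll P^{2\tet_0-3}/q$, there are $\ll q$ admissible $a$, and — parametrising $\balp$ through $r$ linearly independent rows of $\Lam$, so that $\bbet$ runs over $\asymp q^r$ translated cubes of side $1/q$ per unit $\balp$-cube — the $\balp$-integral of $(\abs{P\bbet})^{-n-\eps}\abs{\bK_\pm(\balp)}$ over $\{\abs{P\bbet}\ge T\}$, $T:=P^{(2+\del)\tet_0}/q$, is $\ll P^\eps q^rP^{-r}T^{r-n-\eps}$, using $\int_{\abs{\bgam}\ge T,\,\bgam\in\bR^r}\abs{\bgam}^{-n-\eps}\d\bgam\asymp T^{r-n-\eps}$ (convergent since $n>r$). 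Summing over $a$ and then over $q\le C_1P^{2\tet_0}$ — the $q$-powers combining to $\sum_q q^{n-h/8+\eps}\asymp P^{2\tet_0(n-h/8+1)+O(\eps)}$ — collapses the whole quantity to
\[
\ll P^{\,n-r-3+\tet_0(4+2r-h/4-\del(n-r))+O(\eps)},
\]
which is $o(P^{n-r-3})$ because \eqref{numvars} forces $4+2r-h/4<0$ (and $-\del(n-r)\le 0$).

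I expect the main obstacle to be the bookkeeping in the second piece: one must notice that disjointness of the arcs pins down a single $(q,a)$, so that the bounds $q\le C_1P^{2\tet_0}$ and $\abs{P\bbet}\ge P^{(2+\del)\tet_0}/q$ are available simultaneously, and then carry the $q$-dependence of the cutoff $T$ through the sum $\sum_{q\le C_1P^{2\tet_0}}q^{n-h/8+\eps}$ carefully enough that the exponent of $\tet_0$ comes out strictly negative under \eqref{numvars}. The first piece is a routine re-run of Corollary \ref{FirstArcs}.
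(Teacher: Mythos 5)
Your second piece, $\fN(\tet_0) \setminus \fR(\tet_0)$, is essentially the right mechanism and is in fact the same one the paper uses (failure of the $\balp$-approximation at a scale where $\alp_0$ has a good approximation forces $|P\bbet|$ to dominate $1+P^3|\bet_0|$, so Heath--Brown's first derivative bound makes $I$, hence $S_0$, tiny). One small repair is needed there: the constraint $|P\bbet| \ge T$ is a statement about the maximum over all $n$ components of $\bbet$, and it may be realised by a component $v \notin V$, so it does not restrict the $r$ free coordinates $\bbet_V$; your appeal to $\int_{|\bgam| \ge T} |\bgam|^{-n-\eps} \d \bgam$ is therefore not literally available. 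The bound you claim is nonetheless correct: since $|P\bbet| \ge \max(T, P|\bbet_V|)$ one may integrate $\max(T, P|\bbet_V|)^{-n-\eps}$ over the $\asymp q^r$ cubes of side $1/q$, and the region $|P\bbet_V| < T$ contributes $(T/P)^r T^{-n-\eps}$, of the same order $P^{-r}T^{r-n-\eps}$.

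The genuine gap is in your first piece, $\fN(1/2-\del) \setminus \fN(\tet_0)$. There you use only the Weyl/Davenport--Lewis bound for $|S|$ together with the $\alp_0$-measure of the $\fN$-arcs, with no saving at all in the $\balp$-directions, and as you yourself note this forces $4\psi - h\psi/4 < -r$ at the bottom scale, i.e.\ $\tet_0 > 4r/(h-16)$. But $\tet_0$ is fixed in the paper as a \emph{small} positive constant before this lemma, and it is that small $\tet_0$ which is carried through \S\ref{MVE}--\S\ref{DHmethod} (where $\fR = \fR(\tet_0)$ and the Davenport--Heilbronn arcs are calibrated by $\del^2\tet_0$); proving the statement only for $\tet_0$ in a window just below $1/2-\del$ is not the statement needed, and adopting such a $\tet_0$ would at the very least require re-auditing the later sections, which you have not done. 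Moreover the obstruction is not cosmetic: for small $\tet_0$, the pointwise Weyl bound times the measure of $\fN(\psi)$ genuinely exceeds $P^{n-r-3}$, so no choice of dissection points rescues this route. The paper's proof avoids the problem by dissecting the whole of $\fN(1/2-\del) \subseteq \fR(1/2)$ into shells $\fR(\tet_g)\setminus\fR(\tet_{g-1})$ with $\tet_0 = $ bottom scale: on each shell it argues by cases on $|S|$ --- if $|S| \ge P^{n-h\tet_{g-1}/4+\eps}$ then Lemma \ref{DL} supplies a denominator $q \le C_1P^{2\tet_{g-1}}$, and non-membership of $\fR(\tet_{g-1})$ forces $q|\bbet| \ge P^{(2+\del)\tet_{g-1}-1}$, whence $S_0 \ll 1$ by the oscillatory-integral decay; otherwise \eqref{PS3} gives $S_0 \ll P^{n-h\tet_{g-1}/4+\eps}$ --- and then integrates against $\meas(\fR(\tet_g)\cap\cU) \ll P^{4\tet_g - 3 - r + (2+\del)\tet_g r}$, whose extra factor $P^{(2+\del)\tet_g r - r}$ supplies exactly the $\balp$-saving your first piece lacks and makes the exponent negative uniformly in the scale, so the argument works for arbitrarily small $\tet_0$. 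In short: you apply the "bad $\balp$-approximation $\Rightarrow$ $I$ negligible" mechanism only at the bottom scale, whereas it must be interleaved with the $\alp_0$-approximation quality at every scale between $\tet_0$ and $1/2$.
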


\begin{proof}
Note that $\fN(1/2-\del) \subseteq \fN(1/2) = \fR(1/2)$. Let 
\[
(\alp_0,\balp) \in \fN(1/2 - \del) \cap \fR(\tet_g) \setminus \fR(\tet_{g-1})
\]
for some $g \in \{1,2,\ldots,t\}$, where
\[
0 < \tet_0 < \tet_1 < \ldots < \tet_t = 1/2.
\]

First suppose that $|S(\alp_0,\balp)| \ge P^{n-h \tet_{g-1}/4 + \eps}$. By Lemma \ref{DL}, there exist relatively prime integers $q$ and $a$ satisfying
\[
1 \le q \le C_1 P^{2 \tet_{g-1}}, \qquad |q \alp_0 - a| < P^{2 \tet_{g-1} - 3}.
\]
Let $\bet_0, \ba$ and $\bbet$ be as in \S \ref{Poisson}, with $\blam = \Lam \balp$. Since $(\alp_0,\balp) \notin \fR(\tet_{g-1})$, we must have $q |\bbet| \ge P^{(2+\del)\tet_{g-1} - 1}$. Now
\[
P|\bbet| \gg q^{-1} P^{(2+\del) \tet_{g-1}} \gg P^{\del \tet_{g-1}} P^3|\bet_0|,
\]
so \cite[Lemma 10]{HB1996} yields
\[
I(P^3 \bet_0, P \bbet) \ll_N (q^{-1} P^{(2+\del) \tet_{g-1}} )^{-N} \ll P^{-N \del \tet_{g-1}}
\]
for any $N > 0$. Choosing $N$ large now gives $S_0(\alp_0, \balp) \ll 1$.

Now suppose instead that $|S(\alp_0,\balp)| < P^{n-h/4 \tet_{g-1} + \eps}$. As \mbox{$(\alp_0,\balp) \in \fN(1/2 - \del)$}, there exist relatively prime integers $q$ and $a$ such that \mbox{$(\alp_0,\balp) \in \fN_{q,a}(1/2 - \del)$}. From \eqref{PS3}, we have
\[
S(\alp_0,\balp) - S_0(\alp_0,\balp) \ll q^{n-h/8 + \eps} \ll P^{n-h/8},
\]
and now the triangle inequality yields
\begin{equation} \label{S0bound}
S_0(\alp_0,\balp) \ll \max(P^{n-h \tet_{g-1} /4 + \eps},  P^{n-h/8}) = P^{n-h \tet_{g-1} /4 + \eps}.
\end{equation}

The bound \eqref{S0bound} is valid in both cases, so
\[
\int |S_0(\alp_0,\balp)| \d \alp_0\d \balp
\ll  P^{4 \tet_g - 3 - r + (2+\del) \tet_g r} P^{n-h \tet_{g-1} /4 + \eps},
\]
where the integral is over $\fN(1/2-\del) \cap \cU \cap  \fR(\tet_g) \setminus \fR(\tet_{g-1})$. The right hand side is $O(P^{n-r-3-\eps})$ if $\frac{\tet_g}{\tet_{g-1}} \simeq 1$, since $h \ge 17 + 8r$. We can therefore choose $\tet_1, \ldots, \tet_{t-1}$ with $t \ll 1$ satisfactorily, to ensure that
\[
\int_{\cU \cap \fN(1/2-\del) \setminus \fR(\tet_0)} |S_0(\alp_0,\balp)| \d \alp_0 \d \balp
\ll P^{n-r-3-\eps}.
\]
The desired inequality now follows from \eqref{Tbound}, \eqref{Ldef}, \eqref{Kbounds} and \eqref{Kprod}. 
\end{proof}

Thus, to prove \eqref{goal3} and hence \eqref{asymp}, it suffices to show that
\begin{equation} \label{goal4}
\int_\fR S_0(\alp_0,\balp) e(-\balp \cdot \btau) \bK_\pm(\balp) \d \alp_0 \d \balp 
= (2 \eta)^r \fS \chi_w P^{n-r-3} + o(P^{n-r-3})
\end{equation}
where, now and henceforth, we write
\[
\fR = \fR_P = \fR(\tet_0), \qquad \fR(q,a,\ba) = \fR_{q,a,\ba}(\tet_0).
\]
We now choose our unit interval
\begin{equation} \label{interval}
U = (P^{2 \tet_0 - 3}, 1 + P^{2 \tet_0 - 3}].
\end{equation}
This choice ensures that if the conditions \eqref{R1} and \eqref{R2} hold with $\tet = \tet_0$ for some $(\alp_0,\balp) \in \bR \times \bR^r$ then $\alp_0 \in U$ if and only if $1 \le a \le q$. In particular, the set $\fR$ is the disjoint union of the sets $\fR(q,a,\ba)$ over integers $q,a,a_1,\ldots,a_n$ satisfying 
\begin{equation} \label{R2b}
1 \le a \le q \le C_1 P^{2\tet_0}, \qquad (a,q) = 1.
\end{equation}

\section{A mean value estimate}
\label{MVE}

We begin by bounding $I(\gam_0,\bgam)$. In light of \eqref{PS2}, the first step is to bound $g(\alp_0, \blam)$.

\begin{lemma} \label{gbound} 
Let $\xi$ be a small positive real number. Let $\alp_0 \in \bR$ and $\blam \in \bR^n$ with $|\alp_0| < P^{-3/2}$. Then
\[
g(\alp_0, \blam) \ll P^{n+\xi} (P^3 |\alp_0|)^{-h/8}
\]
and
\[
g_u(\alp_0, \blam) \ll P^{n+\xi} (P^3 |\alp_0|)^{-h/8},
\]
where $g_u(\alp_0, \blam)$ is given by \eqref{gudef}.
\end{lemma}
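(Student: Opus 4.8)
The plan is to deduce the bound for $g(\alp_0,\blam)$ from a Weyl-type estimate governed by the $h$-invariant, and then to transfer it to the complete sum $g_u$. The starting point is a version of Lemma~\ref{DL} in contrapositive form: if $g(\alp_0,\blam)$ (or $g_u(\alp_0,\blam)$) exceeds $P^{n-\frac h4\tet+\eps}$ for some $\tet\in(0,1)$, then $\alp_0$ admits a rational approximation $a/q$ with $q\le C_1P^{2\tet}$ and $|q\alp_0-a|<P^{2\tet-3}$. Here we are in the regime $|\alp_0|<P^{-3/2}$, which is tiny, so the only such approximation with small denominator is forced to have $a=0$: indeed, if $q\le C_1P^{2\tet}$ with $2\tet<3/2$, then $q|\alp_0|<C_1P^{2\tet-3/2}<1/2$ for $P$ large, so $\|q\alp_0\|=q|\alp_0|$ and the approximating integer is $a=0$, forcing $q=1$ by coprimality. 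Thus a nontrivial approximation is \emph{unavailable} precisely when $P^3|\alp_0|$ is not too small, and this is what we exploit.

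Concretely, set $Q=P^3|\alp_0|$; by hypothesis $1\le Q<P^{3/2}$ (the case $Q<1$, i.e.\ very small $|\alp_0|$, is handled trivially by the bound $g\ll P^n$, since then $(P^3|\alp_0|)^{-h/8}\gg P^{-3h/16}$ and... — actually here one simply notes that the claimed bound $P^{n+\xi}Q^{-h/8}$ exceeds $P^n$ when $Q\le P^{8\xi/h}$, so we may assume $Q$ is at least a small power of $P$). Now choose $\tet$ so that $P^{2\tet}\asymp Q^{1/2}$, i.e.\ $P^{2\tet}=P^3|\alp_0|$ up to constants; since $|\alp_0|<P^{-3/2}$ we get $\tet<3/4$, and since $Q$ is at least a small power of $P$ we get $\tet$ bounded below by a positive constant, so $\tet\in(0,1)$ as required by Lemma~\ref{DL}. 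With this choice, any approximation produced by Lemma~\ref{DL} would have $q\le C_1P^{2\tet}\asymp C_1Q^{1/2}<C_1P^{3/4}$ and $|q\alp_0-a|<P^{2\tet-3}\asymp Q^{1/2}P^{-3}=|\alp_0|^{1/2}P^{-3/2}<|\alp_0|$ (using $|\alp_0|<P^{-3}$... one must check the numerology, but the point is that $|q\alp_0-a|<|\alp_0|\le q|\alp_0|$ would force $a=0$ and hence $q=1$, giving $|\alp_0|<P^{2\tet-3}\asymp Q^{1/2}P^{-3}$, i.e.\ $P^3|\alp_0|\ll(P^3|\alp_0|)^{1/2}$, i.e.\ $Q\ll 1$, contradicting that $Q$ is a positive power of $P$). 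Therefore the hypothesis of the contrapositive fails, so $g(\alp_0,\blam)\ll P^{n-\frac h4\tet+\eps}=P^{n+\eps}(P^{2\tet})^{-h/8}\asymp P^{n+\eps}Q^{-h/8}=P^{n+\eps}(P^3|\alp_0|)^{-h/8}$, which is the claim with $\xi$ in place of $\eps$.

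The bound for $g_u$ follows verbatim: Lemma~\ref{DL} explicitly asserts the same approximation conclusion for the unweighted sum $g_u(\alp_0,\blam)=\sum_{|\bx|<P}e(\alp_0C(\bx)+\blam\cdot\bx)$, so the identical argument applies. The main obstacle is purely bookkeeping: one must pin down the implied constants and the admissible range of $\tet$ so that the chosen $\tet$ genuinely lies in $(0,1)$ and so that the inequality $|q\alp_0-a|<|\alp_0|$ (forcing $a=0$) really does hold for $P$ large, carefully treating the boundary cases $Q\le 1$ and $Q$ close to $P^{3/2}$. There is no analytic difficulty beyond Lemma~\ref{DL} itself; the lemma is essentially a clean repackaging of the Davenport--Lewis estimate in a form convenient for the Poisson-summed integral $I(\gam_0,\bgam)$ via \eqref{PS2}.
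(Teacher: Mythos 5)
Your overall strategy is the right one, and it is essentially what the paper does: the paper's proof simply invokes the argument of the corollary to Birch's Lemma 4.3 together with Lemma \ref{DL}, i.e.\ exactly your contrapositive-plus-optimal-$\theta$ scheme, with the hypothesis $|\alpha_0|<P^{-3/2}$ used to force the putative approximation to be $a=0$, $q=1$ and hence a contradiction. However, your write-up is internally inconsistent about the choice of $\theta$, and as written it does not yield the claimed exponent. You first declare $P^{2\theta}\asymp Q^{1/2}$ ``i.e.\ $P^{2\theta}=P^3|\alpha_0|$ up to constants'' --- these are two different normalizations, since $Q=P^3|\alpha_0|$. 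Your verification that no approximation can exist (the chain $|q\alpha_0-a|<P^{2\theta-3}\asymp Q^{1/2}P^{-3}$, hence $a=0$, $q=1$, hence $Q\ll Q^{1/2}$) is carried out with the $Q^{1/2}$ normalization, but your final line computes $P^{n-h\theta/4+\varepsilon}=P^{n+\varepsilon}(P^{2\theta})^{-h/8}\asymp P^{n+\varepsilon}Q^{-h/8}$, which is only correct under the $Q$ normalization; with $P^{2\theta}\asymp Q^{1/2}$ you obtain merely $P^{n+\varepsilon}Q^{-h/16}$, which is weaker than the lemma.

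To get the exponent $h/8$ you must take $P^{2\theta}\asymp Q$, and then the forcing step needs to be redone: now $q$ may be as large as $C_1Q$, so the triangle inequality only gives $|a|\le q|\alpha_0|+P^{2\theta-3}\ll C_1$, not $|a|<1$, and near the top of the range $Q\approx P^{3/2}$ a nonzero $a$ with $q\asymp P^{3/2}$ is not obviously excluded. The standard repair is to set $P^{2\theta}=cQ$ with $c$ a sufficiently small constant in terms of $C_1$: then $q\le C_1cQ$ and $|q\alpha_0-a|<c|\alpha_0|$ give $|a|\le C_1cP^3|\alpha_0|^2+c|\alpha_0|\le C_1c+o(1)<1$, so $a=0$, whence $q=1$ by coprimality and $|\alpha_0|<c|\alpha_0|$ is the contradiction; the constant $c$ is harmlessly absorbed, giving $g\ll P^{n+\varepsilon}(cQ)^{-h/8}\ll P^{n+\xi}Q^{-h/8}$. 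Note also that your parenthetical claim that $|q\alpha_0-a|<|\alpha_0|\le q|\alpha_0|$ ``forces $a=0$'' is not a valid implication on its own; what forces $a=0$ is the bound $q|\alpha_0|+|q\alpha_0-a|<1$, which does hold in your regime but should be said. The preliminary reduction to $Q\ge P^{8\xi/h}$, the check that $\theta\in(0,1)$, and the observation that Lemma \ref{DL} applies verbatim to $g_u$ are all fine.
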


\begin{proof}
This follows from the argument of the corollary to \cite[Lemma 4.3]{Bir1962}, using Lemma \ref{DL}.
\end{proof}

\begin{lemma} \label{Ibound}
We have
\begin{equation} \label{Iineq}
I(\gam_0,\bgam) \ll \frac1 {1 + (|\gam_0| + |\bgam|)^{h/8-\eps}}.
\end{equation}
\end{lemma}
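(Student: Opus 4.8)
The plan is to estimate the oscillatory integral $I(\gam_0,\bgam) = \int_{\bR^n} w(\bx) e(\gam_0 C(\bx) + \bgam \cdot \bx) \d \bx$ in two regimes and glue the bounds. First, the trivial bound $I(\gam_0,\bgam) \ll 1$ holds since $w$ has bounded support and $|w| \le 1$; this already yields \eqref{Iineq} whenever $|\gam_0| + |\bgam| \ll 1$, so we may assume $|\gam_0| + |\bgam|$ is large.

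For the large regime, I would split according to whether $|\gam_0|$ or $|\bgam|$ dominates. If $|\bgam| \ge |\gam_0|$ (so $|\bgam| \gg |\gam_0| + |\bgam|$ is large), then the phase $\gam_0 C(\bx) + \bgam \cdot \bx$ has a large linear part, and repeated integration by parts in the direction of the largest component of $\bgam$ — legitimate because $w$ is smooth with bounded support and bounded derivatives of all orders (see \eqref{wdef}) — gives $I(\gam_0, \bgam) \ll_N (|\bgam|/(1+|\gam_0|))^{-N}$ for any $N$; since $|\gam_0| \le |\bgam|$ here this is $\ll_N |\bgam|^{-N/2} \ll (|\gam_0|+|\bgam|)^{-h/8+\eps}$ on taking $N$ large. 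The remaining and main case is $|\gam_0| > |\bgam|$ with $|\gam_0|$ large. Here I would relate $I$ back to the exponential sum via \eqref{PS2}: choosing $P$ so that $P^3 \asymp |\gam_0|$ (concretely $P^3 = |\gam_0|$, which forces $|\alp_0| = P^{-3}|\gam_0| \cdot P^{-3}\cdot P^3$... more precisely set $\alp_0 = \gam_0 P^{-3}$ so that $|\alp_0|=1 < P^{-3/2}$ fails) — so instead I would take $P$ slightly larger, say $P^3 = |\gam_0|^{1+\eps}$, ensuring $|\alp_0| = |\gam_0| P^{-3} < P^{-3/2}$, and $\blam = \bgam / P$. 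Then Lemma \ref{gbound} gives $g(\alp_0,\blam) \ll P^{n+\xi}(P^3|\alp_0|)^{-h/8} = P^{n+\xi}|\gam_0|^{-h/8}$, and the tail sum in \eqref{PS2} is controlled by the rapid decay of $I$ in its second argument (the case already handled, applied to each $P\blam - P\bc$ with $\bc \ne \bzero$), which is summable and negligible. Dividing by $P^n$ yields $I(\gam_0,\bgam) \ll P^{\xi - 0}|\gam_0|^{-h/8} \ll |\gam_0|^{-h/8 + \eps} \asymp (|\gam_0| + |\bgam|)^{-h/8+\eps}$, as required.

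Combining the trivial bound for $|\gam_0|+|\bgam| \ll 1$ with the two large-regime bounds, and absorbing the various $\eps$'s, gives $I(\gam_0,\bgam) \ll (1 + (|\gam_0|+|\bgam|)^{h/8-\eps})^{-1}$ uniformly. The main obstacle is the bookkeeping in the case $|\gam_0| > |\bgam|$: one must choose the auxiliary parameter $P$ as a function of $\gam_0$ so that simultaneously $|\alp_0| < P^{-3/2}$ (to invoke Lemma \ref{gbound}), the main term $P^{-n} g \cdot$ stuff has the right size, and the Poisson tail in \eqref{PS2} genuinely decays — and then to check the estimate is uniform in $\bgam$ in this range rather than only for $\bgam$ bounded. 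A cleaner alternative, avoiding the exponential sum entirely, is to prove the $|\gam_0|$-large decay directly: after the linear change of variables absorbing $\bgam$ into a shift (or by van der Corput / stationary phase for the cubic), the $h$-invariant hypothesis forces $C$ to have many "independent" variables, and one can extract decay $|\gam_0|^{-h/8+\eps}$ from Weyl-type differencing on the continuous integral exactly as in the corollary to \cite[Lemma 4.3]{Bir1962}; I would likely present this route, since it mirrors Lemma \ref{gbound} and keeps the argument self-contained.
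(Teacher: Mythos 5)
Your skeleton is the same as the paper's (trivial bound for $|\gam_0|+|\bgam|\ll 1$, then \eqref{PS2} plus Lemma \ref{gbound} for the main term and rapid decay of $I$ in its linear argument for the rest), but two of your concrete steps fail. First, your case split at $|\bgam|\ge|\gam_0|$ does not work in the comparable range $|\gam_0|\le|\bgam|\le C|\gam_0|$: there the gradient $\gam_0\nabla C(\bx)+\bgam$ can vanish on the support of $w$ (stationary points), so no non-stationary-phase argument gives decay in $|\bgam|$, and your inequality $(|\bgam|/(1+|\gam_0|))^{-N}\ll_N|\bgam|^{-N/2}$ is false there, since $|\gam_0|\le|\bgam|$ only gives $|\bgam|/(1+|\gam_0|)\ge|\bgam|/(1+|\bgam|)\to 1$. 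The correct bound in that range is $|\gam_0|^{-h/8+\eps}\asymp(|\gam_0|+|\bgam|)^{-h/8+\eps}$, so the split must be taken at $|\bgam|\ge C_2|\gam_0|$ with $C_2$ large (so that the linear part genuinely dominates and \cite[Lemma 10]{HB1996} applies), with the exponential-sum argument covering all of $|\bgam|<C_2|\gam_0|$; this is exactly what the paper does.

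Second, in your main case the choice $P^3=|\gam_0|^{1+\eps}$ does not satisfy the hypothesis of Lemma \ref{gbound}: with $\alp_0=\gam_0/P^3$ one has $|\alp_0|=|\gam_0|^{-\eps}$ while $P^{-3/2}=|\gam_0|^{-(1+\eps)/2}$, so $|\alp_0|<P^{-3/2}$ forces $\eps>1$. You need $P\gg|\gam_0|^{2/3+\del}$, and in fact $P$ must also be much larger than $|\bgam|$: if $P\ll|\bgam|$, some tail term $\bgam-P\bc$ with $\bc\ne\bzero$ can be small (or merely comparable to $|\gam_0|$), so the Poisson tail in \eqref{PS2} is not negligible and your appeal to "rapid decay in the second argument" (which itself rested on the flawed first case) breaks down, and uniformity in $\bgam$ is lost. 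Both defects are repaired simultaneously by taking $P$ a large power of $|\gam_0|+|\bgam|$ (the paper takes $P=(|\gam_0|+|\bgam|)^n$), which makes $|\alp_0|<P^{-3/2}$ automatic, forces $|\bgam-P\bc|\gg P|\bc|\gg|\gam_0|$ so that \cite[Lemma 10]{HB1996} kills the tail, and yields the intermediate estimate $I(\gam_0,\bgam)\ll(|\gam_0|+|\bgam|)^{\eps}|\gam_0|^{-h/8}$ valid for all $\bgam$; combined with the $|\bgam|\ge C_2|\gam_0|$ case this gives \eqref{Iineq}. Your proposed "cleaner alternative" via differencing directly on the continuous integral is only sketched and is not needed once these parameter choices are fixed.
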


\begin{proof}
As $I(\gam_0,\bgam) \ll 1$, we may assume that $|\gam_0| + |\bgam|$ is large. From \eqref{PS2}, we have
\[
I(\gam_0,\bgam) = P^{-n} g(\gam_0 / P^3, \bgam / P) - \sum_{\bc \ne \bzero} I(\gam_0, \bgam - P \bc).
\]
Since $I(\gam_0,\bgam)$ is independent of $P$, we are free to choose $P = (|\gam_0| + |\bgam|)^n$. By Lemma \ref{gbound} and \cite[Lemma 10]{HB1996}, we now have
\begin{equation} \label{Iprelim}
I(\gam_0,\bgam) \ll P^{\eps/n} |\gam_0|^{-h/8} = \frac{ (|\gam_0| + |\bgam|)^\eps} { |\gam_0|^{h/8}}.
\end{equation}

Let $C_2$ be a large positive constant. If $|\bgam| \ge C_2 |\gam_0|$ then \cite[Lemma 10]{HB1996} yields $I(\gam_0, \bgam) \ll_N |\bgam|^{-N} \ll_N (|\gam_0| + |\bgam|)^{-N}$ for any $N > 0$, while if $|\bgam| < C_2 |\gam_0|$ then \eqref{Iprelim} gives
\[
I(\gam_0, \bgam) \ll (|\gam_0| + |\bgam|)^{\eps - h/8}.
\]
The latter bound is valid in either case. As $|\gam_0| + |\bgam|$ is large, our proof is complete.
\end{proof}

We now have all of the necessary ingredients to obtain a mean value estimate of the correct order of magnitude. Let $\fU$ be an arbitrary unit hypercube in $r$ dimensions, and put $\cU = U \times \fU$. Let $V \subseteq \{1,2,\ldots,n\}$ index $r$ linearly independent rows of $\Lam$. When $(\alp_0,\balp) \in \fR(q,a,\ba)$ and $(a,q) = 1$, write
\[
F(\alp_0, \balp) = F(\alp_0, \balp; P) = \prod_{v \le n} (q+ P | q \lam_v - a_v|)^{-1}, 
\]
where $\blam = \Lam \balp$. As $h/8 > r+2$, Lemmas \ref{Sbound} and \ref{Ibound} imply that
\begin{align*}
&S_0(\alp_0, \balp) F(\alp_0,\balp)^{-\eps} \\
&\ll P^n q^{-r-2-\eps} (1+P^3|\alp_0 - a/q|)^{-1-\eps} 
\prod_{v \in  V} (1+ P|\lam_v - a_v/q|)^{-1-\eps}.
\end{align*}
For each $q$, we can choose from $O(q^{r+1})$ values of $a, a_v$ ($v \in V$) for which $\fR(q,a,\ba) \cap \cU$ is nonempty. Thus, an invertible change of variables gives
\begin{align}
\notag &\int_{\fR \cap \cU} |S_0(\alp_0,\balp)| F(\alp_0, \balp)^{-\eps} \d \alp_0 \d \balp \\
\notag &\ll P^n \sum_{q \in \bN} q^{-1-\eps} \int_\bR (1+P^3|\bet_0|)^{-1-\eps} \d \bet_0 \cdot
\int_{\bR^r} \prod_{j \le r} (1+P|\alp'_j|)^{-1-\eps} \d \balp' \\
\label{MV} &\ll P^{n-r-3}.
\end{align}
Positivity has permitted us to complete the summation and the integrals to infinity for an upper bound.

\section{The Davenport--Heilbronn method}
\label{DHmethod}

In this section, we specify our Davenport--Heilbronn dissection, and complete the proof of \eqref{asymp}. The bound \eqref{MV} will suffice on the Davenport--Heilbronn major and trivial arcs, but on the minor arcs we shall need to bound $F(\alp_0,\balp)$ nontrivially. Using the methods of Bentkus, G\"otze and Freeman, as exposited in \cite[Lemmas 2.2 and 2.3]{Woo2003}, we will show that $F(\alp_0, \balp) = o(1)$ in the case that $|\balp|$ is of `intermediate' size. The success of our endeavour depends crucially on our irrationality hypothesis for $\bL$. 

In order for the argument to work, we need to essentially replace $F$ by a function $\cF$ defined on $\bR^r$. For $\balp \in \bR^r$, let $\cF(\balp; P)$ be the supremum of the quantity
\[
\prod_{v \le n} (q + P|q \lam_v - a_v|)^{-1}
\]
over $q \in \bN$ and $\ba \in \bZ^n$, where  $\blam = \Lam \balp$. Note that if $(\alp_0,\balp) \in \fR_P$ then
\begin{equation} \label{Fs}
F(\alp_0, \balp; P) \le \cF(\balp; P).
\end{equation}
Moreover, since $\Lam$ has full rank, we have
\begin{equation} \label{Lam}
|\balp| \ll |\Lam \balp| \ll |\balp|.
\end{equation}

\begin{lemma} \label{BGF1}
Let $0 < V \le W$. Then
\begin{equation} \label{BGF1eq}
\sup_{V \le |\Lam \balp| \le W} \cF(\balp; P) \to 0 \qquad (P \to \infty).
\end{equation}
\end{lemma}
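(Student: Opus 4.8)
The plan is to argue by contradiction: if \eqref{BGF1eq} fails, then there is a fixed $\del_0 > 0$, a sequence $P \to \infty$, and for each such $P$ a point $\balp = \balp(P)$ with $V \le |\Lam \balp| \le W$ for which $\cF(\balp; P) \ge \del_0$. By the definition of $\cF$, for each such $P$ there exist $q = q(P) \in \bN$ and $\ba = \ba(P) \in \bZ^n$ with
\[
\prod_{v \le n} (q + P|q \lam_v - a_v|)^{-1} \ge \del_0, \qquad \blam = \Lam \balp.
\]
Since every factor is at most $1$, this forces $q \le \del_0^{-1}$ and $P|q\lam_v - a_v| \le \del_0^{-1}$ for each $v$, so $q$ is bounded and $|q \lam_v - a_v| \le \del_0^{-1}/P \to 0$. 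Passing to a subsequence, I may assume $q$ is constant, say $q = q_0$, and that the bounded vectors $\balp(P)$ converge to some $\balp^* \in \bR^r$ with $V \le |\Lam \balp^*| \le W$; in particular $\balp^* \ne \bzero$. Writing $\blam^* = \Lam \balp^*$, the relation $|q_0 \lam_v(P) - a_v| \to 0$ together with $\lam_v(P) \to \lam_v^*$ shows that $q_0 \lam_v^*$ is a limit of integers, hence $q_0 \lam_v^* \in \bZ$ for every $v$. Thus $q_0 \blam^* = q_0 \Lam \balp^* \in \bZ^n$.

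Now I unwind this into a statement about $\bL$. Recall from \eqref{lamdef} and \eqref{LamDef} that the $i$-th row of $\Lam$ (read appropriately) encodes the coefficient vector of $L_i$; more precisely $\Lam \balp$ is the coefficient vector of the linear form $\sum_{i \le r} \alp_i L_i = \balp \cdot \bL$. Hence $q_0 \Lam \balp^* \in \bZ^n$ says exactly that $q_0 (\balp^* \cdot \bL)$ has integer coefficients, i.e.\ $\balp^* \cdot \bL$ is a rational form. But $\balp^* \ne \bzero$, contradicting the standing hypothesis that $\balp \cdot \bL$ is irrational for every $\balp \in \bR^r \setminus \{\bzero\}$. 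This contradiction proves \eqref{BGF1eq}.

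A couple of routine points need to be checked along the way. First, the extraction of convergent subsequences is legitimate because $\{\balp : V \le |\Lam\balp| \le W\}$ is bounded (by \eqref{Lam}, $|\balp| \ll |\Lam\balp| \le W$) and the relevant integers $q(P)$ lie in the finite set $\{1, \dots, \lceil \del_0^{-1}\rceil\}$, so a diagonal argument over the finitely many coordinates and the discrete parameter $q$ applies. Second, one should note that the supremum defining $\cF(\balp; P)$ is attained (or at least approached) at $q$ bounded by $\del_0^{-1}$ whenever the supremum exceeds $\del_0$, which is what licenses the extraction. The main obstacle, such as it is, is purely bookkeeping: making sure the compactness/subsequence argument is set up cleanly so that a single $q_0$ and a single limit point $\balp^*$ can be isolated, and then correctly translating the integrality conclusion $q_0 \Lam \balp^* \in \bZ^n$ back through the definitions \eqref{lamdef}--\eqref{LamDef} into the assertion that $\balp^* \cdot \bL$ is a rational form. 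There is no hard analysis here — the irrationality hypothesis on $\bL$ is doing all the work, exactly as advertised in the paragraph preceding the lemma.
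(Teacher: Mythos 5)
Your proposal is correct and follows essentially the same route as the paper's proof: negate the statement, use the lower bound on $\cF$ to force $q$ bounded and $|q\Lam\balp - \ba|\to 0$, extract a convergent subsequence by compactness (using \eqref{Lam} and $|\Lam\balp|\ge V$ to keep the limit nonzero), and conclude that $\balp^*\cdot\bL$ would be a rational form, contradicting the irrationality hypothesis. The only cosmetic difference is that the paper pins down a single pair $(q,\ba)$ by pigeonhole rather than arguing that a limit of integers is an integer; this changes nothing of substance.
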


\begin{proof}
Suppose for a contradiction that \eqref{BGF1eq} is false. Then there exist $\psi > 0$ and
\[
(\balp^{(m)}, P_m, q_m, \ba^{(m)}) \in \bR^r \times [1,\infty) \times \bN \times \bZ^n \qquad (m \in \bN)
\]
such that the sequence $(P_m)$ increases monotonically to infinity, and such that if $m \in \bN$ then
\begin{enumerate}[(i)]
\item
\[
V \le |\blam^{(m)}| \le W
\]
and 
\item
\begin{equation} \label{BGFc2}
\prod_{v \le n} (q_m + P_m|q_m \lam_v^{(m)} - a^{(m)}_v|) < \psi^{-1},
\end{equation}
\end{enumerate}
where $\blam^{(m)} = \Lam \balp^{(m)}$ ($m \in \bN$). Now $q_m < \psi^{-1} \ll 1$, so $|\ba^{(m)}| \ll 1$. In particular, there are only finitely many possible choices for $(q_m, \ba^{(m)})$, so this pair must take a particular value infinitely often, say $(q, \ba)$. 

From \eqref{BGFc2}, we see that $q \blam^{(m)}$ converges to $\ba$ on a subsequence. The sequence $(|\balp^{(m)}|)_m$ is bounded so, by compactness, we know that $\balp^{(m)}$ converges to some vector $\balp$ on a subsubsequence. Therefore $q \Lam \balp = \ba$, and in particular $\Lam \balp$ is a rational vector, so $\balp \cdot \bL$ is a rational form. Note that $\balp \ne \bzero$, since $|\balp^{(m)}| \gg 1$. This contradicts our hypothesis on $\bL$, thereby establishing \eqref{BGF1eq}.
\end{proof}

\begin{cor} \label{BGF2}
Let $\tet$ be a small positive real number. Then there exists a function $T: [1,\infty) \to [1,\infty)$, increasing monotonically to infinity, such that $T(P) \le P^\tet$ and
\begin{equation} \label{FreemanBound}
\sup_{P^{\tet - 1} \le |\Lam \balp| \le T(P)} \cF(\balp; P) \le T(P)^{-1}.
\end{equation}
\end{cor}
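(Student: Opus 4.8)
The plan is to deduce the corollary from Lemma \ref{BGF1} by a standard diagonalisation argument, producing $T$ as a slowly growing function. First I would apply Lemma \ref{BGF1} with $V = 2^{-j}$ and $W = 2^j$ for each $j \in \bN$: this tells us that
\[
\sup_{2^{-j} \le |\Lam\balp| \le 2^j} \cF(\balp; P) \to 0 \qquad (P \to \infty).
\]
Hence for each $j$ there is a threshold $P_j$, which we may take to increase monotonically to infinity, such that this supremum is at most $2^{-j}$ whenever $P \ge P_j$. Now define $T(P) = 2^j$ for $P \in [P_j, P_{j+1})$ (and $T(P) = 1$ for $P < P_1$), so that $T$ is a step function increasing monotonically to infinity. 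By construction, if $P \ge P_j$ and $j$ is the largest index with $P \ge P_j$, then $T(P) = 2^j$ and
\[
\sup_{T(P)^{-1} \le |\Lam\balp| \le T(P)} \cF(\balp; P) \le 2^{-j} = T(P)^{-1}.
\]
Since $P^{\tet-1} \ge T(P)^{-1}$ holds once $P$ is large enough in terms of the growth rate of $T$ — and we are free to slow $T$ down, e.g. by replacing each $P_j$ with $\max(P_j, 2^{j/\tet})$, which forces $T(P) \le P^{\tet}$ — the interval $P^{\tet-1} \le |\Lam\balp| \le T(P)$ is contained in $T(P)^{-1} \le |\Lam\balp| \le T(P)$, giving \eqref{FreemanBound}.

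The one point requiring a little care is reconciling the two constraints $T(P) \le P^{\tet}$ and $T(P)^{-1} \le P^{\tet - 1}$, i.e. $P^{1-\tet} \le T(P) \cdot P \cdot$ wait — rather, $T(P) \ge P^{1-\tet}$ would be needed for the lower endpoints to match exactly, which is incompatible with $T(P) \le P^{\tet}$. The resolution is that we do not need the endpoints to match: we only need $P^{\tet - 1} \ge T(P)^{-1}$, that is $T(P) \ge P^{1-\tet}$, which is false, so instead we need the weaker $[P^{\tet-1}, T(P)] \subseteq [T(P)^{-1}, T(P)]$, i.e. $P^{\tet - 1} \ge T(P)^{-1}$, i.e. $T(P) \ge P^{1-\tet}$ — this is the obstacle. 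In fact the correct reading is that $\cF(\balp; P)$ is being controlled on the larger range $|\Lam\balp| \le T(P)$ with lower cutoff $P^{\tet-1}$, and since $P^{\tet-1} \to 0$ much faster than $T(P)^{-1}$, one instead applies Lemma \ref{BGF1} with $V_j = P_j^{\,\tet - 1}$ depending on the threshold itself; concretely, choose $P_j$ recursively so that $\sup_{P_j^{\tet-1} \le |\Lam\balp| \le 2^j} \cF(\balp; P) \le 2^{-j}$ for all $P \ge P_j$, which is legitimate because for fixed $j$ the quantity $P_j^{\tet-1}$ is just some fixed positive number $V$ and Lemma \ref{BGF1} applies. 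Then with $T(P) = 2^j$ on $[P_j, P_{j+1})$ and $P_j \ge 2^{j/\tet}$ enforced, we get both $T(P) \le P^\tet$ and $P^{\tet-1} \le P_j^{\tet-1}$, the latter since $P \ge P_j$ and $\tet - 1 < 0$, so the supremum in \eqref{FreemanBound} is bounded by the supremum over the larger set $P_j^{\tet-1} \le |\Lam\balp| \le 2^j$, which is $\le 2^{-j} = T(P)^{-1}$.

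I expect the main obstacle to be purely bookkeeping: arranging the recursive choice of the thresholds $P_j$ so that all three requirements — monotonicity of $T$ to infinity, the growth bound $T(P) \le P^\tet$, and the defining inequality \eqref{FreemanBound} with the matching lower cutoff — hold simultaneously. There is no analytic difficulty beyond Lemma \ref{BGF1} itself; the estimate \eqref{Lam} guarantees $|\balp|$ and $|\Lam\balp|$ are comparable so the phrasing in terms of $|\Lam\balp|$ versus $|\balp|$ is immaterial.
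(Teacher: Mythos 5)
Your first half is fine and is exactly the paper's construction: diagonalise Lemma \ref{BGF1} over a sequence of annuli, define $T$ as a step function, and slow it down so that $T(P) \le P^{\tet}$; this yields
\[
\sup_{T(P)^{-1} \le |\Lam \balp| \le T(P)} \cF(\balp; P) \le T(P)^{-1}.
\]
The genuine gap is in passing from the lower cutoff $T(P)^{-1}$ to the lower cutoff $P^{\tet-1}$ in \eqref{FreemanBound}. You correctly noticed that $P^{\tet-1} \le T(P)^{-1}$, so the range in \eqref{FreemanBound} is \emph{larger}, but your repair does not work. First, the proposed recursive choice of $P_j$ with $V_j = P_j^{\tet-1}$ is circular: Lemma \ref{BGF1} with a fixed $V$ produces a threshold depending on $V$, and you need $P_j$ to dominate the threshold associated with $V = P_j^{\tet-1}$; this is a fixed-point requirement that the lemma does not furnish (nothing in it controls how the threshold grows as $V \to 0$). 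Second, even granting that choice, your final containment is inverted: for $P > P_j$ one has $P^{\tet-1} \le P_j^{\tet-1}$, so $[P_j^{\tet-1}, 2^j]$ is the \emph{smaller} set, and the supremum in \eqref{FreemanBound} is bounded \emph{below}, not above, by the supremum you have controlled. The annulus $P^{\tet-1} \le |\Lam\balp| < P_j^{\tet-1}$ (equivalently, $< T(P)^{-1}$) is never handled, and it cannot be handled by Lemma \ref{BGF1} alone, since that lemma says nothing uniform about $|\Lam\balp|$ tending to $0$ with $P$ (indeed $\cF(\balp;P) \gg 1$ when $|\Lam\balp| \asymp P^{-1}$).

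The missing ingredient is a direct structural argument using the definition of $\cF$, which is how the paper closes the proof. Suppose $P$ is large, $|\Lam\balp| < T(P)^{-1}$ and $\cF(\balp;P) > T(P)^{-1}$. Then some $q \in \bN$, $\ba \in \bZ^n$ satisfy $\prod_{v \le n}(q + P|q\lam_v - a_v|) < T(P)$ with $\blam = \Lam\balp$, whence $q < T(P)^{1/n}$ and $|q\blam - \ba| < T(P)/P$. The triangle inequality gives $|\ba| < T(P)/P + T(P)^{1/n-1} < 1$, so $\ba = \bzero$, and then $|\Lam\balp| \le |q\blam| < T(P)/P \le P^{\tet-1}$. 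Thus any point of the extra region $P^{\tet-1} \le |\Lam\balp| < T(P)^{-1}$ automatically satisfies $\cF(\balp;P) \le T(P)^{-1}$, and combining this with your diagonalisation step yields \eqref{FreemanBound}. Without some such argument exploiting what $\cF$ actually is, the extension of the range to $|\Lam\balp| \ge P^{\tet-1}$ does not follow.
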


\begin{proof} Lemma \ref{BGF1} yields a sequence $(P_m)$ of large positive real numbers such that
\[
\sup_{1/m \le |\Lam \balp| \le m} \cF(\balp; P_m) \le 1/m.
\]
We may assume that this sequence is increasing, and that $P_m^{\tet} \ge m$ ($m \in \bN$). Define $T(P)$ by $T(P) = 1$ ($1 \le P \le P_1$) and $T(P) = m$ ($P_m \le P < P_{m+1}$). Note that $T(P) \le P^\tet$, and that $T(P)$ increases monotonically to infinity. Now
\[
\sup_{T(P)^{-1} \le |\Lam \balp| \le T(P)} \cF(\balp; P) \le T(P)^{-1},
\]
for if $P \ge P_m$ then $\cF(\balp; P) \le \cF(\balp; P_m)$.

The inequality \eqref{FreemanBound} plainly holds if $P \le P_1$. Thus, it remains to show that if $P$ is large and
\begin{equation} \label{rest}
|\Lam \balp| < T(P)^{-1} < \cF(\balp; P)
\end{equation}
then $|\Lam \balp| < P^{\tet - 1}$. Suppose we have \eqref{rest}, with $P$ large. Writing $\blam = \Lam \balp$, we have
\[
\prod_{v \le n} (q + P|q \lam_v - a_v|) < T(P)
\]
for some $q \in \bN$ and some $\ba \in \bZ^n$. Now $q < T(P)^{1/n}$ and
\[
|q \blam - \ba| < T(P) / P,
\]
so the triangle inequality and \eqref{rest} give
\[
|\ba| < T(P)/P + T(P)^{1/n - 1} < 1.
\]
Therefore $\ba = \bzero$, and so
\[
|\Lam \balp| \le |q \blam| < T(P) / P \le P^{\tet - 1},
\]
completing the proof.
\end{proof}

Let $C_3$ be a large positive constant. Let $T(P)$ be as in Corollary \ref{BGF2}, with $\tet = \del^2 \tet_0$. We define our Davenport--Heilbronn major arc by
\[
\fM = \{ (\alp_0, \balp) \in \bR \times \bR^r: |\balp| < C_3 P^{\del^2 \tet_0 - 1} \},
\]
our minor arcs by
\[
\fm = \{ (\alp_0, \balp) \in \bR \times \bR^r: C_ 3 P^{\del^2 \tet_0 - 1} \le |\balp| \le C_3^{-1} T(P) \}
\]
and our trivial arcs by
\[
\ft = \{ (\alp_0, \balp) \in \bR \times \bR^r: |\balp| > C_3^{-1} T(P) \}.
\]
It follows from \eqref{Fs}, \eqref{Lam} and \eqref{FreemanBound} that
\begin{equation} \label{Freeman}
\sup_{\fR \cap \fm} F(\alp_0,\balp) \le T(P)^{-1}.
\end{equation}

Let $\fU$ be an arbitrary unit hypercube in $r$ dimensions, and put $\cU = U \times \fU$. By \eqref{MV} and \eqref{Freeman}, we have
\[
\int_{\fR \cap \fm \cap \cU} |S_0(\alp_0,\balp)|  \d \alp_0 \d \balp \ll T(P)^{-\eps} P^{n-r-3}.
\]
Now \eqref{Ldef}, \eqref{Kbounds} and \eqref{Kprod} yield
\begin{equation} \label{MinorBound}
\int_{\fR \cap \fm} |S_0(\alp_0,\balp) \bK_\pm (\balp)|  \d \alp_0 \d \balp = o(P^{n-r-3}).
\end{equation}
Note that
\begin{equation} \label{Ftrivial}
0 < F(\alp_0,\balp) \le 1.
\end{equation}
Together with \eqref{Ldef}, \eqref{Kbounds}, \eqref{Kprod} and \eqref{MV}, this gives
\begin{align} \notag 
\int_{\fR \cap \ft} |S_0(\alp_0,\balp) \bK_\pm (\balp)|  \d \alp_0 \d \balp 
&\ll P^{n-r-3} L(P)^r \sum_{n=0}^\infty (C_3^{-1} T(P) + n)^{-2} \\
\label{TrivialBound}
&\ll P^{n-r-3} L(P)^r T(P)^{-1} = o(P^{n-r-3}).
\end{align}
Coupling \eqref{MinorBound} with \eqref{TrivialBound} yields
\[
\int_{\fR \setminus \fM} |S_0(\alp_0,\balp) \bK_\pm (\balp)|  \d \alp_0 \d \balp = o(P^{n-r-3}).
\]

Recall that to show \eqref{asymp} it remains to establish \eqref{goal4}. Defining
\[
S_1 = \int_{\fR \cap \fM} S_0(\alp_0, \balp) e(-\balp \cdot \btau) \bK_\pm (\balp) \d \alp_0 \d \balp,
\]
it now suffices to prove that
\[
S_1 = (2\eta)^r \fS \chi_w P^{n - r - 3} + o(P^{n-r-3}).
\]
By \eqref{Kdef}, we have
\[
K_{\pm}(\alp) = (2 \eta \pm \rho) \cdot \sinc(\pi \alp \rho) \cdot \sinc(\pi \alp(2\eta \pm \rho)).
\]
Now \eqref{Tbound}, \eqref{Ldef} and the Taylor expansion of $\sinc(\cdot)$ yield
\[
K_{\pm}(\alp) = 2 \eta + O(L(P)^{-1}) \qquad (|\alp| < P^{-1/2}).
\]
Substituting this into \eqref{Kprod} shows that if $(\alp_0,\balp) \in \fM$ then
\begin{equation} \label{SincTaylor}
\bK_\pm(\balp) = (2\eta)^r + O(L(P)^{-1}).
\end{equation}
Moreover, it follows from \eqref{MV} and \eqref{Ftrivial} that
\begin{equation} \label{UpperBound}
\int_{\fR \cap\fM} |S_0(\alp_0,\balp)| \d \alp_0 \d \balp \ll P^{n-r-3}.
\end{equation}
From \eqref{SincTaylor} and \eqref{UpperBound}, we infer that
\[
S_1 = (2\eta)^r \int_{\fR \cap \fM} S_0(\alp_0,\balp) e(-\balp \cdot \btau)  \d \alp_0 \d \balp + o(P^{n-r-3}).
\]

Thus, to prove \eqref{asymp}, it remains to show that
\begin{equation} \label{S2goal}
S_2 = \fS \chi_w P^{n - r - 3} + o(P^{n-r-3}),
\end{equation}
where 
\[
S_2 = \int_{\fR \cap \fM} S_0(\alp_0,\balp) e(-\balp \cdot \btau) \d \alp_0 \d \balp.
\]
For $q \in \bN$ and $a \in \{ 1, 2, \ldots, q \}$, let $X(q,a)$ be the set of $(\alp_0,\balp) \in \bR \times \bR^r$ satisfying 
\[
q \le C_1 P^{2 \tet_0}, \qquad |q\alp_0 - a| < P^{2 \tet_0 - 3}.
\]

\begin{lemma} \label{trick}
Assume \eqref{R2b}. Then $\fR(q,a,\bzero) \cap \fM = X(q,a) \cap \fM$.
\end{lemma}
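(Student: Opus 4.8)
The plan is to unwind both definitions and observe that on the major arc $\fM$ the extra constraint defining $\fR(q,a,\bzero)$ — namely the second inequality in \eqref{R1} with $\ba = \bzero$, i.e. $|q \Lam \balp| < P^{(2+\del)\tet_0 - 1}$ — is automatically satisfied, and conversely that membership in $X(q,a) \cap \fM$ forces $(\alp_0,\balp)$ into $\fR(q,a,\bzero)$. First I would recall that $X(q,a)$ encodes exactly the conditions $q \le C_1 P^{2\tet_0}$, $(a,q)=1$ (built into \eqref{R2b}), $1 \le a \le q$, and $|q\alp_0 - a| < P^{2\tet_0 - 3}$; comparing with \eqref{R1} and \eqref{R2}, these are precisely the defining conditions of $\fR(q,a,\bzero)$ \emph{except} for the requirement $|q\Lam\balp - \bzero| < P^{(2+\del)\tet_0 - 1}$. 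So the content of the lemma is that, intersected with $\fM$, that last inequality is equivalent to the others being in force, which reduces to a size comparison.

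The key step is the following. On $\fM$ we have $|\balp| < C_3 P^{\del^2 \tet_0 - 1}$, so by \eqref{Lam} (full rank of $\Lam$) we get $|\Lam \balp| \ll P^{\del^2 \tet_0 - 1}$, whence
\[
|q \Lam \balp| \ll q P^{\del^2 \tet_0 - 1} \le C_1 P^{2\tet_0} \cdot P^{\del^2 \tet_0 - 1} = C_1 P^{(2 + \del^2)\tet_0 - 1}.
\]
Since $\del$ is a small positive constant we have $\del^2 < \del$, so the exponent $(2+\del^2)\tet_0 - 1$ is strictly smaller than $(2+\del)\tet_0 - 1$; hence for $P$ large this quantity is $< P^{(2+\del)\tet_0 - 1}$, which is exactly the condition $|q\Lam\balp - \bzero| < P^{(2+\del)\tet_0 - 1}$ from \eqref{R1}. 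Therefore any $(\alp_0,\balp) \in X(q,a) \cap \fM$ (which supplies the remaining conditions \eqref{R1}, \eqref{R2b}) lies in $\fR(q,a,\bzero) \cap \fM$. The reverse inclusion $\fR(q,a,\bzero) \cap \fM \subseteq X(q,a) \cap \fM$ is immediate on comparing definitions: the conditions $1 \le a \le q \le C_1 P^{2\tet_0}$, $(a,q)=1$ and $|q\alp_0 - a| < P^{2\tet_0 - 3}$ defining $\fR(q,a,\bzero)$ under \eqref{R2b} are exactly those defining $X(q,a)$ (using the choice of $U$ in \eqref{interval} to tie $\alp_0 \in U$ to $1 \le a \le q$, as already noted after \eqref{R2b}).

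I do not anticipate a genuine obstacle here; the only point requiring a little care is making sure the chain of inequalities above really does force the $\ba = \bzero$ component constraint for all large $P$, i.e. that the gain of $P^{-1}$ from the archimedean localisation in $\fM$ beats the factor $q \le C_1 P^{2\tet_0}$ with enough room to spare — which it does precisely because $\del^2 < \del$, so the major-arc radius exponent $\del^2\tet_0 - 1$ is smaller than what \eqref{R1} demands. One should also note in passing that the implied constants depend only on $\Lam$ (hence on $\bL$) and $C_1$, so absorbing them into the inequality for large $P$ is legitimate under the paper's conventions. This completes the argument; the lemma will then be used to replace the awkward region $\fR \cap \fM$ by the cleaner union of boxes $X(q,a) \cap \fM$ in the evaluation of $S_2$.
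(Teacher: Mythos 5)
Your proposal is correct and follows essentially the same route as the paper: the inclusion $\fR(q,a,\bzero) \cap \fM \subseteq X(q,a) \cap \fM$ is immediate, and for the converse one uses $|\balp| < C_3 P^{\del^2\tet_0-1}$ on $\fM$ together with \eqref{Lam} to get $|q\Lam\balp| \ll P^{(2+\del^2)\tet_0-1}$, which is smaller than the threshold $P^{(2+\del)\tet_0-1}$ in \eqref{R1} since $\del^2 < \del$. Your explicit tracking of the factor $q \le C_1P^{2\tet_0}$ and the absorption of constants for large $P$ is exactly what the paper leaves implicit.
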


\begin{proof}
As $\fR(q,a, \bzero) \subseteq X(q,a)$, we have $\fR(q,a,\bzero) \cap \fM \subseteq X(q,a) \cap \fM$. Next, suppose that $(\alp_0,\balp) \in X(q,a) \cap \fM$. Then $|\balp| < C_3 P^{\del^2 \tet_0-1}$, so
\[
|\Lam \balp| \ll P^{\del ^2 \tet_0-1}.
\]
Now $|q \Lam \balp| \ll P^{(2+\del^2)\tet_0 - 1}$, and in particular we have \eqref{R1} with $\tet = \tet_0$ and $\ba = \bzero$. Thus, we have $(\alp_0,\balp) \in \fR(q,a,\bzero)$, and plainly $(\alp_0,\balp) \in\fM$.
\end{proof}

Note also that if $(\alp_0,\balp) \in \fR \cap \fM$ then $(\alp_0,\balp) \in \fR(q,a,\bzero)$ for some $q,a \in \bZ$ satisfying \eqref{R2b}. Indeed, if $(\alp_0,\balp) \in \fR(q,a,\ba) \cap \fM$ for some $q,a,\ba$ satisfying \eqref{R2b}, then the triangle inequality implies that $\ba = \bzero$. By Lemma \ref{trick}, we conclude that $\fR \cap \fM$ is the disjoint union of the sets $X(q,a) \cap \fM$, over $q,a \in \bZ$ satisfying \eqref{R2b}. Put
\[
V(q) = [-q^{-1} P^{2 \tet_0 -3}, q^{-1} P^{2 \tet_0 -3}] \qquad (q \in \bN)
\]
and
\[
W = [-C_3 P^{\del^2 \tet_0 - 1}, C_3 P^{\del^2 \tet_0 - 1}]^r.
\]
Now
\[
S_ 2 =  \sum_{q \le C_1 P^{2 \tet_0}} \sum_{\substack{a=1 \\ (a,q)= 1}}^q \displaystyle \int_{V(q) \times W} 
f_{q,a}(\bet_0,\balp) e(-\balp \cdot \btau) \d \bet_0 \d \balp,
\]
where
\begin{equation} \label{fdef}
f_{q,a}(\bet_0,\balp) = (P/q)^nS_{q,a,\bzero}I(P^3 \bet_0, P \Lam \balp).
\end{equation}

To prove \eqref{S2goal}, we complete the integrals and the outer sum to infinity. In light of \eqref{numvars}, it follows from Lemmas \ref{Sbound} and \ref{Ibound} that if $(a,q) = 1$ then
\begin{equation} \label{fbound}
f_{q,a}(\bet_0,\balp) \ll P^n q^{-3} (1 + P^3|\bet_0|)^{-1-\eps} \prod_{v \in V} (1+ P|\lam_v|)^{-1-\eps},
\end{equation}
where $V$ is as in \S \ref{MVE} and $\blam = \Lam \balp$. Let
\[
S_3 = \sum_{q \le C_1 P^{2 \tet_0}} \sum_{\substack{a=1 \\ (a,q)= 1}}^q \displaystyle \int_{\bR \times W} 
f_{q,a}(\bet_0,\balp) e(-\balp \cdot \btau) \d \bet_0 \d \balp.
\]
By \eqref{fbound} and an invertible change of variables, we have
\begin{align} \notag
S_2 - S_3 & \ll P^n \sum_{q \in \bN} q^{-3} \sum_{\substack{a=1 \\ (a,q)= 1}}^q 
\int_{q^{-1} P^{2 \tet_0 - 3}}^\infty
(P^3 \bet_0)^{-1-\eps} \d \bet_0 \\
\notag & \qquad \cdot
\int_{\bR^r} \prod_{v \in V} (1+P|\lam_v|)^{-1-\eps} \d \blam_V \\
\label{S2S3} &= o(P^{n-r-3}),
\end{align}
where $\blam_V = (\lam_v)_{v \in V}$.

Let
\[
S_4 = \sum_{q \le C_1 P^{2 \tet_0}} \sum_{\substack{a=1 \\ (a,q)= 1}}^q \displaystyle \int_{\bR^{r+1}} 
f_{q,a}(\bet_0,\balp) e(-\balp \cdot \btau) \d \bet_0 \d \balp.
\]
By \eqref{fbound} and an invertible change of variables, we have
\begin{align*} 
S_3 - S_4 & \ll P^n \sum_{q \in \bN} q^{-3} \sum_{\substack{a=1 \\ (a,q)= 1}}^q 
\int_\bR
(1 + P^3 |\bet_0|)^{-1-\eps} \d \bet_0 \\
& \qquad \cdot
\int \prod_{j \le r} (1+P|\alp'_j|)^{-1-\eps} \d \balp'.
\end{align*}
Here the inner integral is over $\balp' \in \bR^r$ such that $ \Lam_V^{-1} \balp'\notin W$, where $\Lam_V$ is the submatrix of $\Lam$ determined by taking rows indexed by $V$. With $c$ a small positive constant, we now have
\begin{equation} \label{S3S4}
S_3 - S_4 \ll P^{n-r-2} \int_{cP^{\del^2 \tet_0 - 1}}^\infty (P \alp)^{-1-\eps} \d \alp = o(P^{n-r-3}).
\end{equation}

Let
\[
S_5 = \sum_{q \in \bN} \sum_{\substack{a=1 \\ (a,q)= 1}}^q \displaystyle \int_{\bR^{r+1}} 
f_{q,a}(\bet_0,\balp) e(-\balp \cdot \btau) \d \bet_0 \d \balp.
\]
By \eqref{fbound} and an invertible change of variables, we have
\begin{align} \notag
S_4 - S_5 & \ll P^n \sum_{q > C_1 P^{2 \tet_0}} q^{-3} \sum_{\substack{a=1 \\ (a,q)= 1}}^q 
\int_\bR
(1 + P^3 |\bet_0|)^{-1-\eps} \d \bet_0 \\
\notag & \qquad \cdot
\int_{\bR^r} \prod_{j \le r} (1+P|\alp'_j|)^{-1-\eps} \d \balp' \\
\label{S4S5}
& \ll P^{n-r-3} \sum_{q > C_1 P^{2 \tet_0}} q^{-2} = o(P^{n-r-3}).
\end{align}
In view of \eqref{SS}, \eqref{sumdef} and \eqref{fdef}, we have
\[
S_5 = P^n \fS \int_{\bR^{r+1}} e(-\balp \cdot \btau)I(P^3 \bet_0, P \Lam \balp) \d \bet_0 \d \balp.
\]
Changing variables yields
\begin{equation} \label{S5alt}
S_5 = P^{n-r-3} \fS \int_{\bR^{r+1}} e(- P^{-1} \balp \cdot \btau) I(\bet_0, \Lam \balp) \d \bet_0 \d \balp.
\end{equation}

By \eqref{chidef} and \eqref{intdef}, we have
\[
\chi_w = \int_{\bR^{r+1}} I(\bet_0, \Lam \balp) \d \bet_0 \d \balp.
\]
As $h \ge 17 + 8r$, the bounds \eqref{Iineq} and
\[
e(-P^{-1} \balp \cdot \btau) - 1  \ll P^{-1} |\balp|
\]
imply that 
\[
\int_{\bR^{r+1}} e(- P^{-1} \balp \cdot \btau) I(\bet_0, \Lam \balp) \d \bet_0 \d \balp
= \chi_w + O(P^{-1}).
\]
Substituting this into \eqref{S5alt} yields
\[
S_5 = P^{n-r-3} \fS \chi_w + o(P^{n-r-3}).
\]
Combining this with \eqref{S2S3}, \eqref{S3S4} and \eqref{S4S5} yields \eqref{S2goal}, completing the proof of \eqref{asymp}.

\section{Positivity of the singular series and singular integral}
\label{positivity}

In this section, we show that $\fS > 0$ and $\chi_w > 0$, thereby completing the proof of Theorem \ref{thm1}. Since $\fS$ is the singular series associated to the cubic form $C$, its positivity is already well understood, and we briefly explain this here. By \cite[Lemma 7.1]{Bir1962} and the discussion preceding it, it suffices to show that if $p$ is prime then $C$ has a nonsingular $p$-adic zero. 

Let $p$ be a prime number. A well known example due to Mordell \cite{Mor1937} demonstrates the existence of cubic forms in nine variables with no nontrivial $p$-adic zeros. However, since $C$ has more than nine variables, a theorem due to Demyanov \cite{Dem1950} and Lewis \cite{Lew1952} implies that $C(\ba) = 0$ for some $\ba \in \bQ_p \setminus \{ \bzero \}$. We shall use a degeneracy argument, also attributable to Lewis \cite[p. 50]{Lew1969}, to infer the existence of a nonsingular zero.

If $\ba$ is nonsingular then we are done, so assume that $\ba$ is singular, and extend to a basis $\{ \ba, \be_2, \ldots, \be_n \}$ for $\bQ_p^n$. Define a cubic form
\[
C^*(\by) = C(y_1 \ba + y_2 \be_2 + \ldots + y_n \be_n).
\]
The Taylor expansion yields
\[
C^*(\by) = y_1^3 C(\ba) + y_1 ^2 \sum_{j=2}^n y_j  \partial_j C^*(1, 0, \ldots, 0)
+ y_1 Q(y_2, \ldots, y_n) + C^\dag(y_2, \ldots, y_n),
\]
where $Q$ is a quadratic form and $C^\dag$ is a cubic form. As $\ba$ is a singular zero of $C$, the gradient vector of $C$ at $\ba$ vanishes with respect to any basis. In particular, we have
\[
\partial_j C^*(1, 0, \ldots, 0) = 0 \qquad (1 \le j \le n).
\]
Thus,
\[
C^*(\by) = y_1 Q(y_2, \ldots, y_n) + C^\dag(y_2, \ldots, y_n).
\]

If $Q$ is not identically zero then we may choose $y_2, \ldots, y_n \in \bQ_p$ such that $Q(y_2, \ldots, y_n) \ne 0$, and choose
\[
y_1 = - \frac{ C^\dag(y_2, \ldots, y_n) } {Q(y_2, \ldots, y_n)};
\]
this gives a nonsingular zero $(y_1, y_2, \ldots, y_n)$ of $C^*$. Since $C^*$ is obtained from $C$ by an invertible change of variables, we would then have a nonsingular zero of $C$.

Suppose instead that $Q$ is identically zero. Then $C^*(\by) = C^\dag(y_2, \ldots, y_n)$; in other words, $C$ is \emph{degenerate} (see \cite[p. 68]{Dav2005}). Repeating the argument over and over leads to a nonsingular zero or a contradiction, since $h(C) \ge 25$ (note that the $h$-invariant is unaffected by changes of basis). We conclude that $\fS > 0$.

For positivity of the singular integral, we begin by establishing the equivalence of the definitions \eqref{chidef} and \eqref{cSchmidt}. Lemma \ref{Ibound} provides the appropriate analogy to \cite[Lemma 11]{Sch1982b}. Thus, following \cite[ch. 11]{Sch1982b} shows that the two definitions are equivalent.

We now work with the definition \eqref{cSchmidt}. We claim that $I_L(\bf) \gg 1$. Since $w(\bx) \gg 1$ for $\bx \in B := \ \bx \in \bR^n: \{ |\bx| \le 1/2 \}$, it suffices to show that
\begin{equation} \label{intbox}
\int_B \Psi_L(\bf(\bx)) \d \bx \gg 1.
\end{equation}
Define a real manifold
\[
\cM = \{ C = L_1 = \ldots = L_r = 0 \} \subseteq \bR^n.
\]
All of our forms have odd degree, so $\cM \cap A \ne \{ \bzero \}$ for every $(r+2)$-dimensional subspace $A$ of $\bR^n$. Thus, by \cite[Lemma 1]{Sch1982a}, we have $\dim(\cM) \ge n-r-1$. The argument of \cite[Lemma 2]{Sch1982b} now confirms \eqref{intbox}, thereby establishing the positivity of $\chi_w$. This completes the proof of Theorem \ref{thm1}.

\section{A more general result}
\label{general}

In this section we prove Theorem \ref{thm2}. We begin by establishing that  if $\balp \in \bR^r \setminus \{ \bzero \}$ then $\balp \cdot \bL$ is not a rational form. Suppose that $\balp \cdot \bL$ is a rational form, for some $\balp \in \bR^r$. Then $\Lam \balp = \bq$ for some $\bq \in \bQ^n$, where $\Lam$ is given by \eqref{LamDef}. Note that $\Lam$ has full rank, since its entries are algebraically independent over $\bQ$ and its $r \times r$ minors are nontrivial integer polynomials in these entries. It therefore follows from $\Lam \balp = \bq$ that $\alp_1, \ldots, \alp_r$ are rational functions in the entries of $\Lam_V$ over $\bQ$, where $V$ is as in \S \ref{MVE} and $\Lam_V$ is the submatrix of $\Lam$ determined by taking rows indexed by $V$. Let $i \in \{ 1,2,\ldots,n \} \setminus V$, and consider the equation
\begin{equation} \label{consider}
\alp_1 \lam_{1,i} + \alp_r \lam_{r,i} = q_i.
\end{equation}
Since $\alp_1, \ldots, \alp_r$ are rational functions in the entries of $\Lam_V$ over $\bQ$, equation \eqref{consider} and the algebraic independence of the entries of $\Lam$ necessitate that $\balp = \bzero$. We conclude that  if $\balp \in \bR^r \setminus \{ \bzero \}$ then $\balp \cdot \bL$ is not a rational form.

Thus, by Theorem \ref{thm1}, we may assume that $h \le 16 + 8r$, and so $n-h > r$. Write
\[
C = A_1 B_1 + \ldots + A_h B_h,
\]
where $A_1, \ldots, A_h$ are rational linear forms and $B_1, \ldots, B_h$ are rational quadratic forms. The vector space defined by
\[
A_1 = \ldots = A_h = 0
\]
has a rational subspace of dimension $n-h$, by the rank-nullity theorem. Let $\bz_1, \ldots, \bz_{n-h}$ be linearly independent integer points in this subspace. Define
\[
L'_i(\by) = L_i (y_1 \bz_1 + \ldots + y_{n-h} \bz_{n-h}) \qquad (1 \le i \le r).
\]
We seek to show that $\bL'(\bZ^{n-h})$ is dense in $\bR^r$. Writing $\bz_j = (z_{j,1}, \ldots, z_{j,n})$ ($1 \le j \le n-h$), and recalling \eqref{lamdef}, we have
\[
L'_i(\by) = \sum_{j \le n-h} \lam'_{i,j} y_j,
\]
where
\[
\lam'_{i,j} = \sum_{k \le n} \lam_{i,k} z_{j,k} \qquad (1 \le i \le r, \quad 1 \le j \le n-h).
\]

\begin{lemma} The $\lam'_{i,j}$ are algebraically independent over $\bQ$.
\end{lemma}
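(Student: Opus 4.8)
The plan is to exploit the block-triangular shape of the linear substitution that produces the $\lam'_{i,j}$ from the $\lam_{i,k}$, together with the elementary fact that a system of linearly independent rational linear forms, evaluated at algebraically independent reals, remains algebraically independent.

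First I would record the shape of the substitution. For each fixed $i \in \{1, \ldots, r\}$, the tuple $(\lam'_{i,1}, \ldots, \lam'_{i,n-h})$ is the image of $(\lam_{i,1}, \ldots, \lam_{i,n})$ under the integer matrix $Z = (z_{j,k})$, with $1 \le j \le n-h$ and $1 \le k \le n$. Since $\bz_1, \ldots, \bz_{n-h}$ are linearly independent over $\bQ$, the rows of $Z$ are linearly independent, so $Z$ has rank $n-h$. Hence the whole family $(\lam'_{i,j})$ is obtained from $(\lam_{i,k})$ by applying the block-diagonal rational matrix $M$ consisting of $r$ copies of $Z$; being block-diagonal with full-row-rank blocks, $M$ has full row rank $(n-h)r$.

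Next I would invoke the following standard fact: if the reals $x_1, \ldots, x_N$ are algebraically independent over $\bQ$ and $M = (M_{\ell k})$ is an $m \times N$ rational matrix of rank $m$, then the linear combinations $y_\ell := \sum_{k \le N} M_{\ell k} x_k$ $(1 \le \ell \le m)$ are algebraically independent over $\bQ$. Indeed, regard the rows of $M$ as linear forms $L_1, \ldots, L_m$ in indeterminates $X_1, \ldots, X_N$; these are linearly independent, so they may be completed to a $\bQ$-basis of the space of linear forms, and the resulting invertible linear change of variables exhibits $\bQ[X_1, \ldots, X_N]$ as a polynomial ring in $L_1, \ldots, L_N$. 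Thus $L_1, \ldots, L_m$ are algebraically independent in $\bQ[X_1, \ldots, X_N]$, so any relation $P(L_1, \ldots, L_m) = 0$ forces $P = 0$; specialising $X_k = x_k$ gives the claim.

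Applying this with the $x_k$ taken to be the $\lam_{i,k}$ and with $M$ the block-diagonal matrix above yields the lemma. I do not anticipate a genuine obstacle: the only points deserving a line of justification are the passage from linear independence of the $\bz_j$ to full row rank of $M$, which is immediate, and the fact that linearly independent rational linear forms are algebraically independent, which is classical.
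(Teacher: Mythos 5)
Your proof is correct and takes essentially the same route as the paper: the paper extends $\bz_1,\ldots,\bz_{n-h}$ to a basis of $\bQ^n$ and pulls back any polynomial relation through the resulting invertible rational matrix, which is precisely your completion of the linearly independent linear forms (the rows of the block-diagonal matrix built from $Z$) to a basis. The general transfer fact you isolate, that a full-row-rank rational matrix applied to algebraically independent reals yields algebraically independent values, is just a packaged version of that same argument.
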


\begin{proof}
Extend $\bz_1, \ldots, \bz_{n-h}$ to a basis $\bz_1, \ldots, \bz_n$ for $\bQ^n$, and define
\[
\lam'_{i,j} = \sum_{k \le n} \lam_{i,k} z_{j,k} \qquad (1 \le i \le r, \quad n-h < j \le n).
\]
We now have an invertible rational matrix
\[
Z = \begin{pmatrix} z_{1,1} & \ldots & z_{1,n} \\
\vdots & & \vdots \\
z_{n,1} & \ldots & z_{n,n},
\end{pmatrix}.
\]
where $\bz_j = (z_{j,1}, \ldots, z_{j,n})$ ($1 \le j \le n$). Put
\[
\Lam' = \begin{pmatrix} \lam'_{1,1} & \ldots & \lam'_{r,1} \\
\vdots & & \vdots \\
\lam'_{1,n} & \ldots & \lam'_{r,n}
\end{pmatrix},
\]
and note that $\Lam' = Z \Lam$.

We shall prove, \emph{a fortiori}, that the entries of $\Lam'$ are algebraically independent over $\bQ$. Let $P'$ be a rational polynomial in $rn$ variables such that $P'(\Lam') = 0$. Define a rational polynomial $P$ in $rn$ variables by
\[
P(\Xi) = P'(Z \Xi), \qquad 
\Xi \in \mathrm{Mat}_{n \times r}.
\]
Now
\[
P(\Lam) = P'(Z\Lam) = P'(\Lam') = 0,
\]
so the algebraic independence of the entries of $\Lam$ forces $P$ to be the zero polynomial. Since
\[
P'(\Xi) = P(Z^{-1} \Xi)
\]
identically, the polynomial $P'$ must also be trivial.
\end{proof}

Thus, the entries of the matrix
\[
A = \begin{pmatrix}
\lam'_{1,1} &\ldots & \lam'_{1,n-h} \\
\vdots && \vdots \\
\lam'_{r,1} &\ldots & \lam'_{r,n-h}
\end{pmatrix}
\]
are algebraically independent over $\bQ$, and we seek to show that 
\[
\{ A \bx : \bx \in \bZ^{n-h} \}
\]
is dense in $\bR^r$. We put $A$ in the form $(I | \Lam'')$, where $I$ is the $r \times r$ identity matrix and $\Lam''$ is an $r \times (n-h-r)$ matrix, by the following operations:

\begin{enumerate}[(i)]
\item Divide the top row by $A_{11}$, so that now $A_{11} = 1$.
\item Subtract multiples of the top row from other rows so that 
\[
A_{21} = \ldots = A_{r1} = 0.
\]
\item Proceed similarly for columns $2,3, \ldots, r$.
\end{enumerate}

It suffices to show that the image of $\bZ^{n-h}$ under left multiplication by $(I | \Lam'')$ is dense in $\bR^r$.

\begin{lemma}
The entries of $\Lam''$ are algebraically independent over $\bQ$.
\end{lemma}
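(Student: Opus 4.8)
The plan is to recognise the operations (i)--(iii) as Gauss--Jordan elimination and then to reduce the claim to a polynomial identity, following the pattern of the previous lemma. Write $A = (M \mid N)$, where $M$ is the $r \times r$ submatrix of $A$ formed by its first $r$ columns and $N$ is the $r \times (n-h-r)$ submatrix formed by the remaining columns. Each leading principal minor of $M$ is, by its Leibniz expansion, a nonzero rational polynomial in the entries of $M$ (the product of the diagonal entries occurs with coefficient $\pm 1$ and is cancelled by no other term), and those entries, being among the $\lam'_{i,j}$, are algebraically independent over $\bQ$; hence no leading principal minor of $M$ vanishes, so the elimination (i)--(iii) never divides by zero, and its net effect is left-multiplication of $A$ by $M^{-1}$. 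Since $M^{-1}(M \mid N) = (I \mid M^{-1}N)$, this identifies
\[
\Lam'' = M^{-1} N .
\]

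Next I would reduce the asserted algebraic independence to a polynomial identity. Let $Q$ be a polynomial with rational coefficients in $r(n-h-r)$ variables such that $Q(\Lam'') = 0$; the goal is to deduce that $Q$ is the zero polynomial. Because $M^{-1} = (\det M)^{-1}\,\mathrm{adj}\, M$, every entry of $M^{-1}N$ is a rational polynomial in the entries of $(M \mid N)$ divided by $\det M$, so
\[
R := (\det M)^{\deg Q} \cdot Q(M^{-1}N)
\]
is a polynomial, with rational coefficients, in the $r(n-h)$ entries of $(M \mid N)$, that is, in the numbers $\lam'_{i,j}$ ($1 \le i \le r$, $1 \le j \le n-h$). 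These numbers are algebraically independent over $\bQ$ by hypothesis, and $R$ takes the value $(\det M)^{\deg Q}\, Q(\Lam'') = 0$ at them, so $R$ must be the zero polynomial.

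Finally I would extract the conclusion by a specialisation, exactly as in the proof of the preceding lemma. Viewing $R$ as a polynomial identity in the entries of a generic pair of matrices $(X \mid Y)$ of sizes $r \times r$ and $r \times (n-h-r)$, set $X = I$; since $\det I = 1$ and $I^{-1}Y = Y$, this yields $Q(Y) \equiv 0$ identically in the entries of $Y$, so $Q$ is the zero polynomial, as required. There is no serious obstacle here: the only points demanding care are the verification that the reduction (i)--(iii) never divides by zero --- which, as noted, follows from the algebraic independence of the entries of $A$ through the non-vanishing of the leading principal minors of $M$ --- and the routine bookkeeping that multiplying $Q(M^{-1}N)$ by $(\det M)^{\deg Q}$ genuinely clears all denominators and leaves a polynomial in the entries of $(M \mid N)$.
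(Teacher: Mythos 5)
Your proof is correct, and it takes a genuinely different route from the paper's. The paper argues locally, one elementary operation at a time: after step (i) it shows the entries other than the top-left one remain algebraically independent (by clearing the denominator $A_{11}^t$), after step (ii) it shows the entries outside the first column remain algebraically independent (by absorbing the substitutions $A_{ij}-A_{i1}A_{1j}$ into a new polynomial), and then it iterates over columns $2,\ldots,r$; the non-vanishing of each pivot comes for free, since at every stage the pivot is one of a family of algebraically independent (hence transcendental, hence nonzero) numbers. You instead argue globally: you identify the net effect of (i)--(iii) as left multiplication by $M^{-1}$, so that $\Lam''=M^{-1}N$, justify the elimination via the non-vanishing of the leading principal minors of $M$, clear denominators once with $(\det M)^{\deg Q}$ using the adjugate formula, invoke the previous lemma to conclude that the resulting polynomial $R$ in the entries of $(M\mid N)$ vanishes identically, and finally specialise $X=I$ to force $Q\equiv 0$. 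Your version buys a cleaner one-shot argument with an explicit closed form for $\Lam''$ and no induction over columns, at the cost of the extra (correctly handled) checks that the pivots are nonzero and that $(\det M)^{\deg Q}$ really clears all denominators; the paper's version is more bookkeeping-heavy but keeps each step elementary and establishes the slightly stronger intermediate fact that full algebraic independence of the untouched entries persists after every operation.
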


\begin{proof} After step (i), the entries of $A$ other than the top-left entry are algebraically independent. Indeed, suppose
\[
P \Bigl (\frac{A_{12}}{A_{11}}, \ldots, \frac{A_{1,n-h}}{A_{11}}, (A_{ij})_{\substack{2 \le i \le r \\ 1 \le j \le n-h}} \Bigr) = 0
\]
for some polynomial $P$ with rational coefficients, where the $A_{ij}$ are the entries of $A$ prior to step (i). For some $t \in \bN$, we can multiply the left hand side by $A_{11}^t$ to obtain a polynomial $P^*$ in $(A_{ij})_{\substack{1 \le i \le r \\ 1 \le j \le n-h}}$ with rational coefficients. The algebraic independence of the $A_{ij}$ implies that $P^*$ is the zero polynomial, and so $P$ must also be the zero polynomial.

After step (ii), the entries of $A$ excluding the first column are algebraically independent. Indeed, suppose 
\[
P \Bigl(A_{12}, \ldots, A_{1,n-h}, (A_{ij} - A_{i1} A_{1j})_{\substack {2 \le i \le r \\ 2 \le j \le n-h}} \Bigr) = 0
\]
for some polynomial $P$ with rational coefficients, where the $A_{ij}$ are the entries of $A$ prior to step (ii). The left hand side may be regarded as a polynomial $P^*$ in the $A_{ij}$ ($(i,j) \ne (1,1)$). The algebraic independence of the $A_{ij}$ ($(i,j) \ne (1,1)$) implies that $P^*$ is the zero polynomial, and so $P$ must also be the zero polynomial. 

We may now ignore column 1, and deal with columns $2,3,\ldots,r$ similarly.
\end{proof}

Next, consider the forms $L''_1, \ldots, L''_r$ given by
\[
L''_i(\bx) = \mu_{i,1} x_1 + \ldots + \mu_{i,n-h-r} x_{n-h-r} \qquad (1 \le i \le r),
\]
where $\mu_{i,j} = \Lam''_{ij}$ ($1 \le i \le r$, $1 \le j \le n-h-r$). It remains to show that $\bL''(\bZ^{n-h-r})$ is dense modulo 1 in $\bR^r$. We shall in fact establish equidistribution modulo 1 of the values of $\bL''(\bN^{n-h-r})$.

For this we use a multidimensional Weyl criterion. With $m=n-h-r$, we need to show that if $\bh \in \bZ^r \setminus \{\bzero\}$ then
\[
P^{-m} \sum_{x_1, \ldots, x_m \le P} e( \bh \cdot \bL''(\bx)) \to 0
\]
as $P \to \infty$. The summation equals
\[
\prod_{j \le m} \sum_{x_j \le P} e \Bigl( x_j \sum_{i \le r} h_i \mu_{i,j} \Bigr),
\]
so it suffices to show that
\[
\sum_{i \le r} h_i \mu_{i,1} \notin \bQ.
\]
This follows from the algebraic independence of the $\mu_{i,j}$, so we have completed the proof of Theorem \ref{thm2}.

\section{Equidistribution}
\label{equidistribution}

In this section, we prove Theorem \ref{thm3}. Let $\bk$ be a fixed nonzero integer vector in $r$ variables. By a multidimensional Weyl criterion, we need to show that
\[
N_u(P)^{-1} \sum_{\substack{|\bx| \le P \\ C(\bx) = 0}} e(\bk \cdot \bL(\bx)) \to 0
\]
as $P \to \infty$, where
\[
N_u(P) = \# \{\bx \in \bZ^n: |\bx| \le P, \: C(\bx) = 0 \}.
\]
It is known that $P^{n-3} \ll N_u(P) \ll P^{n-3}$; see remark (B) in the introduction of \cite{Sch1985}. Thus, it remains to show that
\begin{equation} \label{WeylGoal}
\sum_{\substack{|\bx| < P \\ C(\bx) = 0}} e(\bk \cdot \bL(\bx)) = o(P^{n-3}).
\end{equation}

Let $\tet_0$ be a small positive real number, and let $U$ be as in \eqref{interval}. By rescaling, we may assume that $C$ has integer coefficients. By \eqref{orth}, the left hand side of \eqref{WeylGoal} is equal to
\[
\int_U S_u(\alp_0, \bk) \d \alp_0,
\]
where $S_u(\cdot, \cdot)$ is as defined in the introduction. Recall \eqref{lamdef} and \eqref{LamDef}. Note that
\[
S_u(\alp_0,\bk) = g_u(\alp_0, \blam^*),
\]
where $g_u(\cdot, \cdot)$ is as defined in \eqref{gudef} and $\blam^* = \Lam \bk \in \bR^n$ is fixed.

For $q \in \bN$ and $a \in \bZ$, let $\fN'(q,a)$ be the set of $\alp_0 \in U$ such that
\[
|q \alp_0 - a| < P^{2 \tet_0 - 3}.
\]
Recall that $C_1$ is a large positive real number. For positive integers $q \le C_1P^{2\tet_0}$, let $\fN'(q)$ be the disjoint union of the sets $\fN'(q,a)$ over integers $a$ that are relatively prime to $q$. Let $\fN'$ be the disjoint union of the sets $\fN'(q)$. By Lemma \ref{DL} and the classical pruning argument in \cite[Lemma 15.1]{Dav2005}, it now suffices to prove that
\[
\int_{\fN'} S_u(\alp_0, \bk) \d \alp_0 = o(P^{n-3}).
\]

Let $\alp_0 \in \fN'(q,a)$, with $q \le C_1P^{2\tet_0}$ and $(a,q) = 1$. Then
\[
S_u(\alp_0,\bk) = \sum_{\by \mmod q} e_q(aC(\by))  S_\by(q, \bet_0, \blam^*),
\]
where $\bet_0 = \alp_0 - a/q$, and where in general we define
\[
S_\by(q, \bet_0, \blam) = \sum_{\bz: \: |\by+q\bz| < P} e(\bet_0 C(\by + q \bz) + \blam \cdot (\by + q \bz)).
\]
Note that $|q \bet_0| < P^{2 \tet_0 - 3}$. Let $\fQ$ denote the set of positive integers $q \le C_1P^{2\tet_0}$ such that 
\[
\| q \lam^*_v \| < P^{7 \tet_0 - 1} \qquad (1 \le v \le n),
\]
and put
\[
\fQ' = \{ q \in \bN: q \le C_1P^{2\tet_0} \} \setminus \fQ.
\]

Suppose $q \in \fQ'$, and let $j$ be such that $\| q \lam^*_j \| \ge P^{7 \tet_0 - 1}$. To bound $S_\by(q,\bet_0,\blam^*)$ we reorder the summation, if necessary, so that the sum over $z_j$ is on the inside. We bound this inner sum using the Kusmin--Landau inequality \cite[Theorem 2.1]{GK1991}, and then bound the remaining sums trivially. Note that, as a function of $z_j$, the phase
\[
\bet_0 C(\by + q \bz) + \blam^* \cdot (\by + q \bz)
\]
has derivative 
\[
\bet_0  \frac \partial {\partial z_j} C(\by + q \bz) +  q \lam_j^*,
\]
which is monotonic in at most two stretches. As $\| q \lam^*_j \| \ge P^{7 \tet_0 - 1}$ and
\[
\bet_0  \frac \partial {\partial z_j} C(\by + q \bz) \ll  P^{2 \tet_0 - 1}
\]
over the range of summation, the Kusmin--Landau inequality tells us that the sum over $z_j$ is $O(P^{1-7\tet_0})$. The remaining sums are over ranges of length $O(P/q)$, so
\[
S_\by(q, \bet_0, \blam^*) \ll (P/q)^{n-1} P^{1-7\tet_0}.
\]
Therefore
\[
S_u(\alp_0,\bk) \ll qP^{n-7\tet_0}.
\]
Since $\meas(\fN'(q)) \ll P^{2 \tet_0 - 3}$, we now have
\[
\sum_{q \in \fQ'} \int_{\fN'(q)} S_u(\alp_0,\bk) \d \alp_0 \ll \sum_{q \le C_1P^{2 \tet_0}} P^{2\tet_0-3} qP^{n-7\tet_0}
= o(P^{n-3}).
\]
It therefore remains to show that
\[
\sum_{q \in \fQ} \int_{\fN'(q)} S_u(\alp_0, \bk) \d \alp_0 = o(P^{n-3}).
\]

We shall need to study the more general exponential sums $g_u(\alp_0,\blam)$. Let $q \in \bN$ with $q \le P$, and let $a, a_1, \ldots, a_n \in \bZ$. Put \eqref{put}, and write
\[
g_u^*(\alp_0, \blam) = (P/q)^n S_{q,a,\ba} I_u(P^3 \bet_0, P \bbet),
\]
where $S_{q,a,\ba}$ is given by \eqref{sumdef}, and where
\[
I_u(\gam_0, \bgam) = \int_{[-1,1]^n} e(\gam_0 C(\bx) + \bgam \cdot \bx) \d \bx.
\]

\begin{lemma} We have
\begin{equation} \label{ggstar}
g_u(\alp_0,\blam) - g_u^*(\alp_0,\blam) \ll qP^{n-1} (1+P^3|\bet_0| + P|\bbet|).
\end{equation}
\end{lemma}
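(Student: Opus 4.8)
The plan is to imitate the classical treatment of main terms of this shape (cf. \cite[Lemma 15.3]{Dav2005}): split the summation defining $g_u$ into residue classes modulo $q$, replace each inner sum by the corresponding integral over a box, recognise the resulting integral contribution as exactly $g_u^*$, and bound the error by an elementary multidimensional comparison between a lattice-point sum and an integral.

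Concretely, since $C$ has integer coefficients and $a,a_1,\ldots,a_n$ are integers, for $\bx \equiv \by \mmod q$ the factor $e_q(aC(\bx)+\ba\cdot\bx)$ depends only on $\by$; writing $\bx = \by + q\bz$ and recalling \eqref{put} gives
\[
g_u(\alp_0,\blam) = \sum_{\by \mmod q} e_q(aC(\by) + \ba\cdot\by)\,S_\by(q,\bet_0,\bbet),
\]
where $S_\by$ is as defined above and $\bz$ ranges over the lattice points of the box $R_\by = \{\bt \in \bR^n : |\by + q\bt| < P\}$, all of whose coordinate extents equal $2P/q$. Next I would observe that the affine substitution $\bx = \by + q\bt$, followed by $\bx = P\bu$, shows that $\int_{R_\by} e(\bet_0 C(\by+q\bt) + \bbet\cdot(\by+q\bt))\d\bt = q^{-n}\int_{|\bx|<P} e(\bet_0 C(\bx)+\bbet\cdot\bx)\d\bx = (P/q)^n I_u(P^3\bet_0,P\bbet)$, which is independent of $\by$; summing this against $e_q(aC(\by)+\ba\cdot\by)$ over $\by \mmod q$ and invoking \eqref{sumdef} recovers exactly $g_u^*(\alp_0,\blam)$. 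Hence $g_u - g_u^*$ is a sum over the $q^n$ residues $\by$ of the discrepancy between $S_\by$ and the associated integral, and it remains to bound each such discrepancy by $\ll (P/q)^{n-1}(1 + P^3|\bet_0| + P|\bbet|)$.

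For that last estimate I would invoke the standard lattice-point-sum-versus-integral inequality for a $C^1$ function on a box, proved by peeling off one coordinate at a time via the identity $\prod_j A_j - \prod_j B_j = \sum_j \bigl(\prod_{i<j}A_i\bigr)(A_j - B_j)\bigl(\prod_{i>j}B_i\bigr)$, where $A_i$ sums and $B_i$ integrates over the $i$-th coordinate range, together with the one-dimensional bound $\bigl|\sum_{a\le m\le b}h(m) - \int_a^b h\bigr| \ll \sup|h| + \int_a^b|h'|$. The quantitative input is that $\phi(\bt) := e(\bet_0 C(\by+q\bt) + \bbet\cdot(\by+q\bt))$ has $|\phi|\equiv 1$ and, since $|(\partial_j C)(\bx)| \ll |\bx|^2 < P^2$ on $R_\by$, satisfies $|\partial_j\phi(\bt)| = |2\pi i(q\bet_0(\partial_j C)(\by+q\bt) + q\bbet_j)\phi(\bt)| \ll qP^2|\bet_0| + q|\bbet|$ throughout $R_\by$. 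Each coordinate interval has extent $2P/q$ and, since $q \le P$, contains $\ll P/q$ integers; so in each of the $O(1)$ terms of the telescoping, $n-1$ coordinates contribute a factor $\ll P/q$ each, while the remaining coordinate contributes an extra factor $\ll 1 + (2P/q)\cdot(qP^2|\bet_0| + q|\bbet|) = 1 + P^3|\bet_0| + P|\bbet|$. Multiplying the per-residue bound $(P/q)^{n-1}(1+P^3|\bet_0|+P|\bbet|)$ by $q^n$ yields \eqref{ggstar}. The argument is essentially routine; the only point needing care is the power-of-$(P/q)$ bookkeeping in the telescoping — exactly one coordinate per term absorbs the derivative factor while the other $n-1$ each supply a clean $P/q$ — and the $C$-dependence of the implied constants (through $|(\partial_j C)(\bx)| \ll P^2$) is harmless, as such dependence is permitted throughout.
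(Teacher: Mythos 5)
Your argument is correct and follows the paper's proof essentially step for step: the same decomposition of $g_u$ into residue classes $\by \bmod q$, the same identification of the main term with $(P/q)^n S_{q,a,\ba} I_u(P^3\bet_0,P\bbet)$, and the same per-class error bound $\ll (P/q)^{n-1}(1+P^3|\bet_0|+P|\bbet|)$ summed over the $q^n$ classes. The only difference is that the paper cites \cite[Lemma 8.1]{Bro2009} for the sum-versus-integral comparison, whereas you prove it directly by the coordinate-by-coordinate telescoping with the first-derivative bound $\ll qP^2|\bet_0|+q|\bbet|$ --- which is precisely the content of that lemma.
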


\begin{proof}
First observe that
\begin{equation} \label{SuDecomp}
g_u(\alp_0,\blam) = \sum_{\by \mmod q} e_q(aC(\by)+ \ba \cdot \by) S_\by(q,\bet_0,\bbet),
\end{equation}
and that
\[
S_\by(q,\bet_0,\bbet) = \sum_{\substack{|\bx| < P \\ \bx \equiv \by \mmod q}} e(\bet_0 C(\bx) + \bbet \cdot \bx).
\]
By \cite[Lemma 8.1]{Bro2009}, we now have
\begin{align*}
S_\by(q,\bet_0,\bbet) &= q^{-n} \int_{[-P,P]^n} e(\bet_0 C(\bx) + \bbet \cdot \bx) \d \bx \\
& \qquad + O\Bigl( \frac{P^{n-1} (1+P^3|\bet_0| + P|\bbet|)} {q^{n-1}}\Bigr) \\
&= (P/q)^n I_u(P^3\bet_0, P\bbet) + O\Bigl( \frac{P^{n-1} (1+P^3|\bet_0| + P|\bbet|)} {q^{n-1}}\Bigr).
\end{align*}
Substituting this into \eqref{SuDecomp} yields \eqref{ggstar}.
\end{proof}

Suppose $\alp_0 \in \fN'(q,a)$ with $q \in \fQ$. With $\blam = \blam^*$ and $a_v$ the nearest integer to $q \lam_v$ ($1 \le v \le n$), put \eqref{put} and $S_u^*(\alp_0, \bk) = g_u^*(\alp_0, \blam^*)$. In light of the inequalities
\[
1 \le q \le C_1 P^{2 \tet_0}, \qquad |q \bet_0| < P^{2 \tet_0 - 3}, \qquad |q \bbet| < P^{7 \tet_0 - 1},
\]
the error bound \eqref{ggstar} implies that
\[
S_u(\alp_0,\bk) - S_u^*(\alp_0,\bk) \ll P^{n-1+7\tet_0}.
\]
Since
\[
\meas \bigl( \cup_{q \in \fQ} \fN'(q) \bigr) \ll P^{4 \tet_0 - 3},
\]
it now suffices to prove that
\begin{equation} \label{finalgoal}
\sum_{q \in \fQ} \int_{\fN'(q)} S_u^*(\alp_0, \bk) \d \alp_0 = o(P^{n-3}).
\end{equation}

The final ingredient that we need for a satisfactory mean value estimate is an unweighted analogue of \eqref{Iineq}.

\begin{lemma} We have
\begin{equation} \label{Iuineq}
I_u(\gam_0, \bgam) \ll
\frac1{1+|\gam_0|^{h/8-\eps} + |\bgam|^{1/3}}.
\end{equation}
\end{lemma}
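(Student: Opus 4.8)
The plan is to prove \eqref{Iuineq} by following the same template as the proof of Lemma \ref{Ibound}, but now using the unweighted oscillatory integral $I_u$ and keeping track of the extra decay in $\bgam$ afforded by the box $[-1,1]^n$. The identity replacing \eqref{PS2} is obtained by specialising $(q,a,\ba)=(1,0,\bzero)$ in \eqref{SuDecomp} (with $\bet_0 = \alp_0$, $\bbet = \blam$): one has
\[
g_u(\gam_0/P^3, \bgam/P) = P^n \sum_{\bc \in \bZ^n} I_u(\gam_0, \bgam - P \bc),
\]
so that
\[
I_u(\gam_0,\bgam) = P^{-n} g_u(\gam_0/P^3, \bgam/P) - \sum_{\bc \ne \bzero} I_u(\gam_0, \bgam - P\bc).
\]
Since $I_u(\gam_0,\bgam)$ does not depend on $P$, we are free to choose $P = (|\gam_0|+|\bgam|)^n$, and apply the unweighted form of Lemma \ref{gbound} together with a suitable bound for $I_u$ on the tail sum. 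For the one-dimensional oscillatory integral bound I would use that $I_u(\gam_0,\bgam)$ is a product of $n$ one-dimensional integrals only after a change of variables is \emph{not} available (the cubic couples the variables), so instead I would cite the standard bound $I_u(\gam_0,\bgam) \ll \min(1, |\bgam|^{-1})$ coming from integrating by parts once in the variable realising $|\bgam|$ — indeed on $[-1,1]^n$ the phase $\gam_0 C(\bx) + \bgam \cdot \bx$ has a variable with large linear derivative when $|\bgam| \gg |\gam_0|$.

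The key steps, in order, would be: \textbf{(1)} dispose of the trivial range by noting $I_u(\gam_0,\bgam) \ll 1$, so one may assume $|\gam_0| + |\bgam|$ large. \textbf{(2)} In the regime $|\bgam| \ge C_2 |\gam_0|$, integrate by parts repeatedly in the coordinate direction achieving $|\bgam|$ to obtain $I_u(\gam_0,\bgam) \ll_N |\bgam|^{-N}$, which certainly dominates the claimed $|\bgam|^{1/3}$ term. \textbf{(3)} In the complementary regime $|\bgam| < C_2|\gam_0|$, run the $P=(|\gam_0|+|\bgam|)^n$ argument: the main term is $P^{-n} g_u(\gam_0/P^3, \bgam/P) \ll P^{\eps/n} |\gam_0|^{-h/8}$ by the unweighted half of Lemma \ref{gbound} (valid since $|\gam_0/P^3| = |\gam_0| P^{-3} < P^{-3/2}$ for $P$ large), while the tail $\sum_{\bc \ne \bzero} I_u(\gam_0, \bgam - P\bc)$ is controlled because $|\bgam - P\bc| \gg P$ for $\bc \ne \bzero$ and we are in the first regime there, giving $\ll_N P^{-N}$. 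This yields $I_u(\gam_0,\bgam) \ll (|\gam_0|+|\bgam|)^\eps |\gam_0|^{-h/8}$, which in this regime is $\ll (|\gam_0|+|\bgam|)^{\eps - h/8}$. \textbf{(4)} Combine: since $h/8 - \eps \ge 1/3$ and since in case (2) we have arbitrary polynomial decay in $|\bgam|$, in all cases $I_u(\gam_0,\bgam) \ll (1 + |\gam_0|^{h/8-\eps} + |\bgam|^{1/3})^{-1}$. The presence of the $|\bgam|^{1/3}$ rather than $|\bgam|^{h/8}$ term simply reflects that we only claim (and only need) modest decay in $\bgam$; one checks $1 + |\gam_0|^{h/8 - \eps} + |\bgam|^{1/3} \ll \max(|\gam_0|^{h/8-\eps}, |\bgam|)$ up to the large-argument assumption, matching both regime bounds.

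The main obstacle I anticipate is making precise the oscillatory-integral bound for $I_u$ in the direction of $|\bgam|$: unlike the smooth weight case handled by \cite[Lemma 10]{HB1996}, the indicator of $[-1,1]^n$ is not smooth, so repeated integration by parts in $x_v$ produces boundary terms on the faces $x_v = \pm 1$ of the cube. These boundary contributions are themselves $(n-1)$-dimensional oscillatory integrals over faces, and one must argue inductively (or invoke a standard lemma on oscillatory integrals over boxes, as in the van der Corput literature) that they too decay; alternatively, a single integration by parts already gives $I_u(\gam_0,\bgam) \ll |\bgam|^{-1}$ with only a one-dimensional boundary term that is trivially $O(1)$, and $|\bgam|^{-1} \ll |\bgam|^{-1/3}$ suffices. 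So in fact a crude single integration by parts is enough for the regime $|\bgam| \ge C_2|\gam_0|$, and no delicate iterated estimate is required — the $1/3$ exponent in the statement is deliberately weak. The remaining steps are then a faithful rerun of the proof of Lemma \ref{Ibound}.
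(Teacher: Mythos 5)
There is a genuine gap at the heart of your step (3): the identity $g_u(\gam_0/P^3,\bgam/P) = P^n\sum_{\bc} I_u(\gam_0,\bgam-P\bc)$ does not follow from specialising \eqref{SuDecomp} (with $q=1$ that equation is a tautology), and it is really an application of Poisson summation to the sharply truncated sum, which is exactly what the absence of the smooth weight $w$ forbids you from exploiting. Even granting the identity in some regularised sense, your bound for the tail is not available: you claim $\sum_{\bc\ne\bzero} I_u(\gam_0,\bgam-P\bc)\ll_N P^{-N}$ by invoking your regime-(2) estimate, but as you yourself observe in your final paragraph, repeated integration by parts fails for the indicator of $[-1,1]^n$ (boundary terms), and the only bound that survives is the single-integration-by-parts/first-derivative estimate $I_u(\gam_0,\bgam-P\bc)\ll |\bgam-P\bc|^{-1}\ll (P|\bc|)^{-1}$. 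That decay is far too weak: $\sum_{\bc\ne\bzero}(P|\bc|)^{-1}$ diverges for $n\ge 2$, so the tail cannot be summed at all. This is precisely the point where the weighted argument of Lemma \ref{Ibound}, which rests on \eqref{PS2} and the rapid decay furnished by \cite[Lemma 10]{HB1996}, does not transplant to $I_u$.

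The paper avoids Poisson summation entirely: it uses the immediately preceding lemma, i.e.\ \eqref{ggstar} (proved via \cite[Lemma 8.1]{Bro2009}, a direct comparison of the sum over each residue class with the corresponding integral), specialised to $(q,a,\ba)=(1,0,\bzero)$, to get $I_u(\gam_0,\bgam)=P^{-n}g_u(\gam_0/P^3,\bgam/P)+O\bigl(P^{-1}(|\gam_0|+|\bgam|)\bigr)$; with the choice $P=(|\gam_0|+|\bgam|)^n$ the error is negligible and Lemma \ref{gbound} gives \eqref{IuPrelim}, after which your case analysis in $|\bgam|\lessgtr C|\gam_0|$ and the final combination go through essentially as you describe. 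Your remaining steps are sound, and in the regime $|\bgam|\ge C_2|\gam_0|$ your first-derivative bound $\ll|\bgam|^{-1}$ is a legitimate (indeed slightly stronger) substitute for the paper's appeal to \cite[Theorem 7.3]{Vau1997}, which yields $|\bgam|^{-1/3}$ from the one-dimensional cubic phase. So the repair is to replace your Poisson-type decomposition and tail estimate by the error bound \eqref{ggstar}; without that, the argument as written does not close.
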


\begin{proof}
Since $I_u(\gam_0,\bgam) \ll 1$, we may assume that $|\gam_0| + |\bgam|$ is large. Specialising $(q,a,\ba) = (1,0,\bzero)$ in \eqref{ggstar}, it follows that
\[
I_u(\gam_0,\bgam) = P^{-n}g_u(\gam_0/P^3, \bgam/P) + O(P^{-1} (|\gam_0| + |\bgam|)).
\]
Since $I_u(\gam_0,\bgam)$ is independent of $P$, we are free to choose $P = (|\gam_0| + |\bgam|)^n$. By Lemma \ref{gbound}, with $\xi = \eps/n^2$, we now have
\[
I_u(\gam_0,\bgam) \ll P^{\eps/n^2} |\gam_0|^{-h/8} + \frac{ |\gam_0| + |\bgam|}P
\ll P^{\eps/n^2} |\gam_0|^{-h/8},
\]
so
\begin{equation} \label{IuPrelim}
I_u(\gam_0,\bgam) \ll \frac{(|\gam_0| + |\bgam|)^{\eps/n}}{|\gam_0|^{h/8}}.
\end{equation}

Let $C_4$ be a large positive constant. As $|\gam_0| + |\bgam|$ is large and $h \ge 17$, the desired inequality follows from \eqref{IuPrelim} if $|\bgam| < C_4 |\gam_0|$. Thus, we may assume that $|\bgam| \ge C_4 |\gam_0|$. Choose $j \in \{1,2,\ldots,n \}$ such that $|\bgam| = |\gam_j|$. Observe that
\[
I_u(\gam_0,\bgam) \ll \sup_{-1 \le x_i \le 1 \: (i \ne j)} \Biggl| \int_{-1}^1 e(\gam_0 C(\bx) + \gam_j x_j) \d x_j \Biggr|.
\]
As $|\gam_j| \ge C_4 |\gam_0|$, the bound \cite[Theorem 7.3]{Vau1997} now implies that
\[
I_u(\gam_0,\bgam) \ll |\gam_j|^{-1/3} = |\bgam|^{-1/3}.
\]
Combining this with \eqref{IuPrelim} gives
\[
I_u(\gam_0,\bgam) \ll 
\frac{(|\gam_0| + |\bgam|)^{\eps/n}}{|\gam_0|^{h/8} + |\bgam|^{1/3}(|\gam_0| + |\bgam|)^{\eps/n}}
\ll \frac{|\bgam|^{\eps/n}}{|\gam_0|^{h/8} + |\bgam|^{1/3+\eps/n}}.
\]
Considering cases and recalling that $h \le n$, we now have
\[
I_u(\gam_0,\bgam) \ll 
\frac1{|\gam_0|^{h/8-\eps} + |\bgam|^{1/3}}.
\]
This delivers the sought estimate \eqref{Iuineq}, since $|\gam_0| + |\bgam| \gg 1$.
\end{proof}

Let $\alp_0 \in \fN'(q,a)$, with $q \in \fQ$ and $(a,q) = 1$. The inequalities \eqref{Sineq}, \eqref{Iuineq} and $h \ge 17$ give
\[
S_u^*(\alp_0, \bk) \ll P^n q^{-2-\eps} (1 + P^3|\alp_0 - a/q|)^{-1-\eps} F(\bk; q, P)^\eps,
\]
where
\[
 F(\bk; q, P) = \prod_{v \le n} (q + P \| q \lam^*_v \|)^{-1}.
\]
Therefore
\begin{align} \notag
\sum_{q \in \fQ} \int_{\fN'(q)} |S_u^*(\alp_0, \bk)| F(\bk; q,P)^{-\eps} \d \alp_0
&\ll P^n \sum_{q \in \bN} q^{-1-\eps} \int_\bR (1+P^3 |\bet_0|)^{-1-\eps} \d \bet_0
\\
\label{finalMV} &\ll P^{n-3}.
\end{align}

As $\bk$ is a fixed nonzero vector, we have $1 \ll |\bk| \ll 1$. In particular, by \eqref{Lam}, we have $1 \ll |\Lam \bk| \ll 1$. Thus, Corollary \ref{BGF2} gives
\[
F(\bk; q, P) \le \cF(\bk, P) = o(1)
\]
as $P \to \infty$. Coupling this with \eqref{finalMV} yields \eqref{finalgoal}, completing the proof of Theorem \ref{thm3}.

\section{The singular locus}
\label{nonsingular}

The \emph{singular locus} of $C$ is the complex variety cut out by vanishing of $\nabla C$. Let $\cS$ be the singular locus of $C$, and let $\sig$ be the affine dimension of $\cS$. Let $h$ be the $h$-invariant of $C$, and let $A_1, \ldots, A_h, B_1, \ldots, B_h$ be as in \eqref{hdef}. Then $\cS$ contains the variety $\{ A_1 = \ldots = A_h = B_1 = \ldots B_h = 0 \}$, and so $\sig \ge n - 2h$. In particular, the conclusions of Theorem \ref{thm1} are valid if the hypothesis \eqref{numvars} is replaced by the condition
\[
n - \sig > 32 + 16 r.
\]
Thus, these conclusions hold for any nonsingular cubic form in more than $32 + 16 r$ variables.

However, one could improve upon this using a direct approach. Note that $h$ could be replaced by $n - \sig$ in Lemma \ref{DL}; the resulting lemma would be almost identical to \cite[Lemma 4.3]{Bir1962}, and again the weights and lower order terms are of no significance. The remainder of the analysis would be identical, and lead us to conclude that Theorem \ref{thm1} is valid with $h$ replaced by $n - \sig$. We could even use the same argument for positivity of the singular series, for if $r \ge 1$ then
\[
h \ge \frac{n-\sig}2 > \frac{16+8r}2 \ge 12 > 9.
\]
In particular, the conclusions of Theorem \ref{thm1} would hold for any nonsingular cubic form in more than $16 + 8 r$ variables. Similarly, Theorem \ref{thm3} is valid with $h$ replaced by $n - \sig$, and so its conclusion would hold for any nonsingular cubic form in more than sixteen variables.

Finally, we challenge the reader to improve upon these statements using more sophisticated technology, for instance to reduce the number of variables needed to solve the system \eqref{system}. It is likely that van der Corput differencing could be profitably incorporated, similarly to \cite{HB2007}. One might also hope to do better by assuming that $C$ is nonsingular, as in \cite{HB1983}.

\bibliographystyle{amsbracket}
\providecommand{\bysame}{\leavevmode\hbox to3em{\hrulefill}\thinspace}

\end{document}